\numberwithin{equation}{section}
\providecommand{\U}[1]{\protect\rule{.1in}{.1in}}
\providecommand{\U}[1]{\protect \rule{.1in}{.1in}}
\newtheorem{theorem}{Theorem}[section]
\newtheorem{corollary}[theorem]{Corollary}
\newtheorem{proposition}[theorem]{Proposition}
\newtheorem{remark}[theorem]{Remark}
\newenvironment{proof}[1][Proof]{\noindent \textbf{#1.} }{\  \rule{0.5em}{0.5em}}
\DeclareMathOperator*{\essinf}{ess\,inf}
\begin{document}
	
	\title{Stochastic representation  under $g$-expectation and applications: the discrete time case}
\author{  Miryana Grigorova \thanks{School of Mathematics, University of Leeds, M.R.Grigorova@leeds.ac.uk.}
\and	Hanwu Li\thanks{Research Center for Mathematics and Interdisciplinary Sciences, Shandong University,		lihanwu@sdu.edu.cn.}
}
\date{}
\maketitle

\begin{abstract}
	In this paper, we address the stochastic representation problem  in discrete time under (non-linear) $g$-expectation. We establish  existence and uniqueness of the solution,  as well as a characterization of the solution.  As an application, we investigate a new approach to the optimal stopping problem under $g$-expectation and the related pricing of American options under Knightian uncertainty. Our results are also applied to   a (non-linear) Skorokhod-type obstacle problem. 
\end{abstract}

{\textbf{Keywords}}: stochastic representation, $g$-expectation, optimal stopping problem, Skorohod problem

{\textbf{MSC2010 subject classification}}: 60H10, 60G40

\section{Introduction}
The stochastic representation problem under  linear expectations was first investigated by  Bank and El Karoui \cite{BE} (2004) for the continuous time case, and by Bank and F\"{o}llmer \cite{BF} (2003) for the discrete time case.


For a given real-valued optional process $X=\{X_t\}_{t\in[0,T]}$ (which is required to have certain  regularity properties), the stochastic representation problem (in continuous time) aims at constructing   a unique progressively measurable process $L=\{L_t\}_{t\in[0,T]}$ such that the given process $X$ can be written as:
\begin{displaymath}
	X_t=E_t[\int_t^T f(s,\sup_{t\leq v\leq s}L_v)ds], \ \ 0\leq t\leq T,
\end{displaymath}
where $f=f(t,l)$ is a given function, assumed to be continuous and strictly decreasing  with respect to $l$ (from $+\infty$ to $-\infty$) and $E_t[\cdot]$ denotes the (linear) conditional expectation with respect to the information available at time $t$.  Bank and El Karoui \cite{BE} show that there exists a unique solution $L$ to the stochastic representation problem. Moreover,   the  solution is  characterized by the following: for every stopping time $\sigma<T$,
\begin{equation}\label{sr6}
	L_\sigma=\essinf_{\tau \in\mathcal{T}_\sigma}l_{\sigma, \tau},\quad  P \textrm{-a.s.},
\end{equation}
where $\mathcal{T}_\sigma$ is the set of all stopping times $\tau$ such that  $\tau>\sigma$ on the set $\{\sigma<T\}$, $P$-a.s. and $l_{\sigma,\tau}$ is the unique $\mathcal{F}_\sigma$-measurable random variable satisfying 
\begin{displaymath}
	E_\sigma[X_\sigma-X_\tau]=E_\sigma[\int_\sigma^\tau f(t,l_{\sigma,\tau})dt].
\end{displaymath}
The stochastic representation results have been successfully   applied to  various stochastic control  problems in mathematical finance and  mathematical economics, such as  optimal consumption choice with Hindy-Huang-Kreps-type preferences  (see  \cite{BR}, \cite{FRS}),   irreversible investment  (see \cite{CF}, \cite{CFR}, \cite{RS}),  dynamic allocation problems (see \cite{EK}), or  a variant of Skorokhod's obstacle problem (see \cite{MW}). Roughly speaking,  finding the optimal consumption plan with intertemporal substitution, the base capacity policy of the irreversible investment problem and the solution to a certain obstacle problem of the Skorokhod type amounts to finding  the solution of a specific stochastic representation problem. More recently,  \cite{BB} extend further the framework for validity and applications of the stochastic representation problem  by using some fine notions and techniques from the general theory of processes. 

Since,  in the above framework, stochastic representation is considered under one probability measure $P$, it cannot be applied to address financial or economic problems involving ambiguity/Knightian uncertainty. Uncertainty typically leads to non-linearity of the ``expectation" operators. 
It is well-known that the (non-linear) $g$-expectation (cf.  Peng \cite{P97})   is  a powerful tool to study problems with ambiguity. In this paper, we are interested in  the stochastic representation problem  under $g$-expectation; formally, this amounts to  replacing  the classical conditional expectation $E_t[\cdot]$ in the formulation of the problem by the conditional $g$-expectation $\mathcal{E}_t[\cdot]$. It is worth pointing out that the construction of the solution to the representation theorem studied by Bank and El Karoui \cite{BE} heavily depends on the linearity of the classical conditional expectation which means that their  construction method is not effective for the non-linear $g$-expectation case. In the current work, we focus on the   non-linear representation problem  in  discrete time. In order to prove the existence, we apply the method of backward induction. The uniqueness is proved by using the fact that the function $f$ is strictly decreasing (in the last component) and the property of strict monotonicity of the  $g$-expectation.   Unlike  the continuous time case (cf. \cite{BE}), we do not need to establish a characterization of the solution $(L_t)$ analogous to \eqref{sr6} to obtain the uniqueness. However, a non-linear analogue of this  characterization  still holds true in our framework. We provide moreover a construction of  a stopping   stopping time $\tau^*_t$ which is optimal, in the sense that $L_{t}=l_{t,\tau^*_t}$. It is worth pointing out that the conditions on the driver $g$ to guarantee the  existence and uniqueness result are weaker than those made  to guarantee the characterization  of the solution. 

 The second part of this paper provides several applications of the stochastic representation problem under $g$-expectation, namely to optimal stopping, to optimal exercise of  American put options under Knightian uncertainty, and to a variant of Skorokhod's obstacle problem.

  It is well known that the stochastic representation problem under \emph{linear} conditional expectations has    strong connections with the (classical) optimal stopping problem (cf. \cite{BF}). It  provides an alternative approach to the celebrated Snell envelope approach to optimal stopping, with fruitful applications in pricing of American options.  In this alternative approach, the solution  $L$ of the stochastic representation for the given reward (or pay-off) process $X$  takes over the role  of the Snell envelope of $X$.  When applied to American options,  this approach allows to  find a \textit{universal} process  \textit{not depending  on the strike price} through which   optimal exercise times can be characterized. 
 For the \emph{non-linear} case, the Snell envelope approach to  optimal stopping under $g$-expectation is well-studied (see, e.g.,  \cite{CR}, \cite{BY},  \cite{GIOQ} for the continuous time case, or \cite{GQ} for the discrete time case). The  stochastic representation results from the first part of our paper  give a new  approach to the non-linear optimal stopping problem under $g$-expectations. This approach is then  applied to derive an optimality criterion  for   American put options under Knightian uncertainty in terms of a universal process independent of the strike price $k$ of the option. In the third application,  the solution of our stochastic representation problem  is used to solve   a variant of Skorokhod's obstacle problem. More specifically, we show that the increasing process $\eta$ from the Skorokhod-type condition in this problem   coincides with  the running supremum of the solution $L$ to the stochastic representation problem for the obstacle process $X$. 

The paper is organized as follows. In Section 2, we first formulate the non-linear stochastic representation problem in discrete time  under $g$-expectations  and establish the existence and uniqueness result, as well as the characterization of the solution.  In Section 3, we present the three   applications: to  optimal stopping, to  the class of American put options with strike prices $k>0$, and to an  obstacle problem of Skorokhod type.

\section{The non-linear stochastic representation problem in discrete time: formulation, existence and uniqueness}

We place ourselves on the canonical space.
Let $\Omega=C_0^d([0,\infty))$ be the space of all continuous, $\mathbb{R}^d$-valued functions on $[0,\infty)$, equipped with the distance:
\begin{displaymath}
	d(\omega^1,\omega^2)=\sum_{n=1}^{\infty}\frac{1}{2^n}\max_{0\leq t\leq n}(|\omega^1_t-\omega^2_t|\wedge 1).
\end{displaymath}
The $\sigma$-algebra is the Borel $\sigma$-algebra.  Let  $P$ be the Wiener measure, under which the canonical process $B$ is a $d$-dimensional Brownian motion. Let $\mathbb{F}=(\mathcal{F}_t)$ be the filtration generated by the Brownian motion $B$. Let  $N\in\mathbb{N}$  be a fixed terminal horizon.
We denote by $L^2(\mathcal{F}_N)$ the space of all $\mathcal{F}_N$-measurable and square-integrable random variables.
In the sequel, the notation  $g:[0,N]\times \Omega\times \mathbb{R}^d\rightarrow \mathbb{R}$  will stand for a driver satisfying the following standard assumptions (unless specified otherwise):
\begin{description}
	\item[(i)] $(g(t,\omega,z))_{t\in[0,N]}$ is progressively measurable and for any $z\in \mathbb{R}^d$,
	\begin{displaymath}
	E[\int_0^N |g(t,z)|^2dt]<\infty;
	\end{displaymath}
	\item[(ii)] There exists a constant $K>0$, such that
	\begin{displaymath}
	|g(t,\omega,z)-g(t,\omega,z')|\leq K|z-z'|;
	\end{displaymath}
	\item[(iii)] For any $(s,\omega)$, $g(s,\omega,0)=0$.
\end{description}
By  a well-known  result of Pardoux and Peng \cite{PP90},  for any terminal condition $X\in L^2(\mathcal{F}_N)$, the Backward SDE
\begin{displaymath}
Y_t=X+\int_t^Ng(s,Z_s)ds-\int_t^N Z_sdB_s,
\end{displaymath}
has a unique adapted solution $(Y,Z)$.   The non-linear expectation operator, induced by a BSDE of the above form,  is known as  conditional $g$-expectation, and    is defined by
\begin{displaymath}
\mathcal{E}_t[X]:=Y_t.
\end{displaymath}
Some of the main properties of the conditional  $g$-expectation are recalled  in the following proposition:
\begin{proposition}
Under the above assumptions on the driver $g$, 	the conditional $g$-expectation satisfies the following properties:
	\begin{description}
		\item[(1)] (monotonicity and strict monotonicity) If $X\leq Y$, then $\mathcal{E}_t[X]\leq \mathcal{E}_t[Y]$. \\
		If, in addition $P(X<Y)>0$, then  $\mathcal{E}_t[X]<\mathcal{E}_t[Y]$;
		\item[(2)] (translation invariance) If $Z\in L^2(\mathcal{F}_t)$, then for all $X\in L^2(\mathcal{F}_N)$, $\mathcal{E}_t[X+Z]=\mathcal{E}_t[X]+Z$;
		\item[(3)] (tower property) For any $0\leq s\leq t\leq T$, $\mathcal{E}_s[\mathcal{E}_t[X]]=\mathcal{E}_s[X]$;
		\item[(4)] (zero-one law) For an event $A\in\mathcal{F}_t$, it holds $\mathcal{E}_t[XI_A+YI_{A^c}]=\mathcal{E}_t[X]I_A+\mathcal{E}_t[Y]I_{A^c}$.
		 \item[(5)] (monotone convergence) For a monotone sequence $\{X_n\}_{n\in\mathbb{N}}\subset L^2(\mathcal{F}_N)$ such that $X_n\uparrow(\downarrow) X$, where $X\in L^2(\mathcal{F}_N)$, we have $\mathcal{E}_t[X_n]\uparrow (\downarrow) \mathcal{E}_t[X]$.
	\end{description}
\end{proposition}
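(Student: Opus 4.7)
The plan is to derive each of the five properties from classical BSDE results together with direct verifications using the defining BSDE $(Y,Z)$ for $\mathcal{E}_t[X]$. The common mechanism is uniqueness: once a candidate $(Y,Z)$ satisfying the appropriate BSDE on $[t,N]$ is exhibited, its value at time $t$ is forced.

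For (1), I invoke the comparison theorem for Lipschitz BSDEs. Given $X\le Y$ with associated solutions $(Y^X,Z^X)$, $(Y^Y,Z^Y)$, I linearize via $g(s,Z^Y_s)-g(s,Z^X_s)=\beta_s^{\top}(Z^Y_s-Z^X_s)$ with $|\beta_s|\le K$ (using Lipschitz continuity of $g$ in $z$) and apply Girsanov to express $Y^Y_t-Y^X_t$ as the conditional expectation of $Y-X\ge 0$ under an equivalent measure $Q$. Monotonicity follows, and strict monotonicity is inherited because $Q$ and $P$ have the same null sets, so $P(X<Y)>0$ yields strictly positive conditional expectation on a set of positive probability. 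Property (2) is a direct verification: for $\xi\in L^2(\mathcal F_t)$, the pair $(Y_s+\xi,Z_s)$, $s\in[t,N]$, satisfies the BSDE with terminal $X+\xi$, so by uniqueness $\mathcal{E}_t[X+\xi]=Y_t+\xi$. Property (3) follows by pasting: the solution on $[t,N]$ with terminal $X$, concatenated with the solution on $[0,t]$ having terminal $Y_t=\mathcal{E}_t[X]$, yields by uniqueness on $[0,N]$ the original solution, so $\mathcal{E}_s[\mathcal{E}_t[X]]=\mathcal{E}_s[X]$ for $s\le t$. Property (4) is similar: for $A\in\mathcal F_t$, the pasted process $(Y^X_s I_A+Y^Y_s I_{A^c},\,Z^X_s I_A+Z^Y_s I_{A^c})$ is adapted on $[t,N]$, and the local property (valid for any $z$-Lipschitz driver) $g(s,zI_A+z'I_{A^c})=g(s,z)I_A+g(s,z')I_{A^c}$ shows it solves the BSDE with terminal $XI_A+YI_{A^c}$.

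For (5), I apply the standard $L^2$ stability estimate for Lipschitz BSDEs to obtain $E[|\mathcal{E}_t[X_n]-\mathcal{E}_t[X]|^2]\le C\,E[|X_n-X|^2]$. Since the monotone sequence $X_n$ is dominated by $|X_1|\vee|X|\in L^2$, dominated convergence gives $X_n\to X$ in $L^2$, hence $\mathcal{E}_t[X_n]\to\mathcal{E}_t[X]$ in $L^2$; by (1), $(\mathcal{E}_t[X_n])_n$ is monotone, so its $L^2$-limit coincides with the $P$-a.s. monotone limit. The main obstacle is (1): once the strict comparison theorem is in hand (with its Girsanov step propagating strict inequality from the terminal to any earlier time), properties (2)--(4) reduce to pasting arguments and (5) to stability combined with domination.
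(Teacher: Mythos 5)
The paper itself gives no proof of this proposition: it is stated as a recollection of classical properties of conditional $g$-expectation, so there is no in-paper argument to compare with, and your route is the standard one from the BSDE literature. Your treatments of (2), (3), (4) are correct: since $g=g(t,\omega,z)$ has no $y$-dependence, the shifted pair solves the BSDE on $[t,N]$ with terminal condition $X+Z$ and uniqueness gives translation invariance; the pasting/flow argument gives the tower property; and the pointwise identity $g(s,zI_A+z'I_{A^c})=g(s,z)I_A+g(s,z')I_{A^c}$ (true for any driver, the Lipschitz condition being needed only for uniqueness) gives the zero--one law. For (5), the a priori $L^2$-estimate combined with domination by $|X_1|\vee|X|\in L^2(\mathcal{F}_N)$ and the monotonicity from (1) is a complete argument.

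The one genuine gap is in strict monotonicity, and it is partly a gap in the statement itself. Your Girsanov representation gives $\mathcal{E}_t[Y]-\mathcal{E}_t[X]=E_Q[\,Y-X\mid\mathcal{F}_t\,]$ (with no exponential factor, again because $g$ has no $y$-argument), and from $P(X<Y)>0$ you conclude only that this difference is strictly positive \emph{on a set of positive probability}, which is weaker than the asserted a.s. inequality $\mathcal{E}_t[X]<\mathcal{E}_t[Y]$. In fact the literal assertion fails for $t>0$: take $X=0$ and $Y=I_A$ with $A\in\mathcal{F}_t$, $0<P(A)<1$; then $\mathcal{E}_t[Y]=I_A$, which coincides with $\mathcal{E}_t[X]=0$ on $A^c$. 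The version that is true, and the one the paper actually invokes (in the uniqueness part of Theorem~\ref{srt5}, strict comparison is applied on a set $A\in\mathcal{F}_t$ on which the terminal variables are strictly ordered), is the conditional one: $\mathcal{E}_t[X]<\mathcal{E}_t[Y]$ a.s. on $\{P(X<Y\mid\mathcal{F}_t)>0\}$, in particular a.s. on any $A\in\mathcal{F}_t$ with $X<Y$ a.s. on $A$. Your representation yields this with one extra line: for $B=A\cap\{E_Q[Y-X\mid\mathcal{F}_t]=0\}\in\mathcal{F}_t$ one has $E_Q[(Y-X)I_B]=0$, hence $Q(B)=0$ because $Y-X>0$ on $B$ and $Q\sim P$. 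You should prove and record this conditional form, since it is exactly what the later uniqueness argument requires.
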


	    Let $X=\{X_t\}_{t=0}^N$ be a given real-valued, adapted and square-integrable process  and let   $f:\Omega\times\{0,1,\cdots,N\}\times \mathbb{R}\rightarrow \mathbb{R}$ be a given function satisfying the following two conditions:
\begin{description}
	\item[(1)] For each $\omega\in \Omega$ and each $t=0,1,\cdots,N$, the function $f(\omega,t,\cdot):\mathbb{R}\rightarrow\mathbb{R}$ is continuous and strictly decreasing from $+\infty$ to $-\infty$;
	\item[(2)] For any $l\in\mathbb{R}$, the process $f(\cdot,\cdot,l):\Omega\times\{0,1,\cdots,N\}\rightarrow\mathbb{R}$ is adapted with
	\begin{displaymath}
		E[\sum_{t=0}^{N}|f(t,l)|^2]<\infty.
	\end{displaymath}
\end{description}

The \textit{non-linear stochastic representation problem in discrete time} is formulated as follows: \\
Find an \emph{adapted} process $L=\{L_t\}_{t=0,1,\cdots,N}$ such that  $\sum_{u=t}^{N}f(u,\max_{t\leq v\leq u}L_v)$ is square-integrable for  all $t=0,1,\cdots,N$, and  such that the following equation holds:
\begin{equation}\label{sr4}
X_t=\mathcal{E}_t[\sum_{u=t}^{N}f(u,\max_{t\leq v\leq u}L_v)],\textrm{ for all }t=0,1,\cdots,N.
\end{equation}


A process $(L_t)$ satisfying these properties will be called \emph{a solution} to the non-linear stochastic representation problem  \eqref{sr4}. We now  state and prove the main result of this section.

\begin{theorem}[Existence and Uniqueness]\label{srt5}
	Under the Assumptions (1)-(2) on the function 
$f$ and Assumptions (i)-(iii) on the driver $g$, there \emph{exists a unique} solution $(L_t)$ to the non-linear stochastic representation problem \eqref{sr4}.
\end{theorem}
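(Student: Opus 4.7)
The natural approach is backward induction on $t = N, N-1, \ldots, 0$, constructing and uniquely identifying $L_t$ at each step by inverting a one-dimensional equation driven by the strictly decreasing function $f(t,\cdot)$.

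At the terminal time $t = N$, \eqref{sr4} reduces to $X_N = \mathcal{E}_N[f(N,L_N)] = f(N,L_N)$, since $\mathcal{E}_N$ is the identity on $L^2(\mathcal{F}_N)$. Because $f(N,\omega,\cdot)$ is, for each $\omega$, a continuous and strictly decreasing bijection of $\mathbb{R}$ onto $\mathbb{R}$, a pointwise inverse exists, and a standard argument (e.g.\ $\{L_N \le c\} = \{X_N \ge f(N,c)\}$ for each rational $c$) ensures that the resulting $L_N$ is $\mathcal{F}_N$-measurable. The $L^2$-hypothesis on $f(\cdot,l)$ combined with $X_N \in L^2$ yields the required integrability at this step.

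For the inductive step, suppose $L_{t+1},\ldots,L_N$ have been constructed as adapted random variables with $\sum_{u=t+1}^{N} f(u, M_u^{t+1}) \in L^2(\mathcal{F}_N)$, where $M_u^{t+1} := \max_{t+1 \le v \le u} L_v$. Separating the $u=t$ term in \eqref{sr4} and using the translation invariance (property (2)) of $\mathcal{E}_t$, the equation for $L_t$ becomes
\begin{equation*}
X_t = f(t,L_t) + \mathcal{E}_t\Big[\sum_{u=t+1}^{N} f\big(u, \max(L_t, M_u^{t+1})\big)\Big].
\end{equation*}
For each deterministic $l \in \mathbb{R}$, define the $\mathcal{F}_t$-measurable random variable $\Psi(l) := f(t,l) + \mathcal{E}_t\bigl[\sum_{u=t+1}^{N} f(u, \max(l, M_u^{t+1}))\bigr]$. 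I would then show: (a) $l \mapsto \Psi(l)$ is strictly decreasing in the sense that $l_1 < l_2$ implies $\Psi(l_1) > \Psi(l_2)$ a.s., by combining strict monotonicity of $f(t,\cdot)$ with monotonicity of $\mathcal{E}_t$ (property (1)) applied to the a.s.\ inequality $\max(l_1, M_u^{t+1}) \le \max(l_2, M_u^{t+1})$; (b) continuity of $l \mapsto \Psi(l)$ and the limits $\Psi(l) \to \pm \infty$ as $l \to \mp\infty$, obtained by applying the monotone convergence property (property (5)) along monotone sequences $l_n \to l$ and $l_n \to \pm\infty$, using that $f(u,\cdot)$ is continuous and surjective onto $\mathbb{R}$. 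Joint measurability of the (pathwise) inverse, via $\{L_t > c\} = \{\Psi(c) > X_t\}$ for rational $c$, then produces an $\mathcal{F}_t$-measurable $L_t$ satisfying $\Psi(L_t) = X_t$. Square integrability of $\sum_{u=t}^{N} f(u, M_u^t)$ reduces to the corresponding bound at level $t+1$, already granted by the induction hypothesis, together with the single additional term $f(t,L_t) = X_t - \mathcal{E}_t[\sum_{u=t+1}^N f(u, \max(L_t, M_u^{t+1}))]$, which is controlled in $L^2$ via the standard $L^2$-estimate for BSDEs.

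Uniqueness is also handled by backward induction: at $t = N$ it follows from injectivity of $f(N,\omega,\cdot)$; given uniqueness on $\{t+1,\ldots,N\}$, any two candidates $L_t, L_t'$ satisfy $\Psi(L_t) = \Psi(L_t') = X_t$, and restricting to the $\mathcal{F}_t$-set $\{L_t > L_t'\}$ and using strict monotonicity of $f(t,\cdot)$ together with strict monotonicity of $\mathcal{E}_t$ (property (1)) forces this set (and by symmetry $\{L_t < L_t'\}$) to have measure zero. The main technical obstacle I expect is the combined continuity/measurable-selection step for $\Psi$: one must show that $\Psi$ admits a jointly measurable version whose $l$-sections are continuous and strictly decreasing for almost every $\omega$, so that the pathwise inversion returns an $\mathcal{F}_t$-measurable $L_t$. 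This is precisely where the monotone convergence property (5) of $\mathcal{E}_t$ and the $L^2$-domination supplied by Assumption (2) on $f$ enter decisively.
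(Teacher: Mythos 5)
Your overall skeleton (backward induction, terminal identification $L_N=f^{-1}(N,X_N)$, and the uniqueness argument via strict decrease of $f(t,\cdot)$ plus strict monotonicity of $\mathcal{E}_t$ on the set $\{L_t>L_t'\}$) matches the paper, and the uniqueness part is fine. The gap is in your existence step. You propose to build the random field $\Psi(l):=f(t,l)+\mathcal{E}_t\bigl[\sum_{u=t+1}^{N}f(u,l\vee M_u^{t+1})\bigr]$ and invert it pathwise at $X_t$, and you yourself flag the needed ingredient: a version of $\Psi$ whose $l$-sections are a.s.\ continuous (and strictly decreasing from $+\infty$ to $-\infty$). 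But the tools you invoke do not deliver this. Monotone convergence (property (5)) gives, for each \emph{fixed} $l$, that $\Psi(l_n)\to\Psi(l)$ a.s.\ along monotone sequences, i.e.\ a.s.\ continuity at each fixed $l$ with an exceptional null set depending on $l$; since $l$ ranges over an uncountable set, this does not yield an a.s.\ continuous modification — a decreasing random field can be a.s.\ continuous at every fixed point yet a.s.\ have a jump at a random location (think of $l\mapsto \psi(l)-I_{\{l\ge U\}}$ with $U$ uniform). Without section continuity, the monotone right-continuous version obtained from rational $l$ may jump over the level $X_t(\omega)$, so the equation $\Psi(L_t)=X_t$ need not have a solution by your construction; moreover, even granting a root, the identity $\mathcal{E}_t\bigl[\sum_{u\ge t}f(u,L_t\vee M_u^{t}\,)\bigr]=\Psi(L_t)$ for a general (non-countably-valued) $\mathcal{F}_t$-measurable $L_t$ is a substitution/local-property statement that does not follow from property (4) as stated and itself needs a limiting argument. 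So the decisive step of your plan is asserted, not proved.

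The gap is repairable, but by changing where the limit is taken: define $L_t:=\inf\{q\in\mathbb{Q}:\ \Psi(q)\le X_t\}$ (measurable, finite a.s.\ by the $\pm\infty$ limits), approximate $L_t$ from above and from below by countably-valued $\mathcal{F}_t$-measurable random variables, use the zero-one law on the countable partitions to identify $\mathcal{E}_t$ of the substituted sums with $\Psi$ evaluated at rationals, and pass to the limit with monotone convergence \emph{inside the $g$-expectation} (plus some care with square-integrability) to get both inequalities $\mathcal{E}_t[\cdot]\le X_t$ and $\ge X_t$; this never requires a continuous version of the field. The paper takes yet another route that sidesteps field regularity altogether: it defines $L_t$ as the essential infimum of the admissible set $\mathcal{H}_t=\{\xi\ \mathcal{F}_t\text{-measurable}:\ \mathcal{E}_t[\widetilde f(t,N,\xi)]\le X_t\}$, shows $\mathcal{H}_t$ is downward directed so that $L_t$ is a decreasing limit of elements of $\mathcal{H}_t$ (hence $L_t\in\mathcal{H}_t$ by monotone convergence), and then rules out strict inequality on a positive-probability set by perturbing $L_t$ downward by $1/n$ on a suitable subset (using a Lusin/Dini uniformity argument) to contradict minimality. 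Until you either prove the continuity-of-sections claim or switch to one of these limit-on-the-expectation arguments, your existence proof is incomplete.
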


\begin{proof}
	We first prove the uniqueness. Suppose that $L^1$ and $L^2$ are are two solutions of the stochastic representation problem  \eqref{sr4}. We have to show that $L_t^1=L^2_t$, for all $t=0,1,\ldots, N$. We proceed by backward induction.  It is easy to check that $L^1_N=L^2_N=f^{-1}(N,X_N)$.  Let $t\in\{0,\cdots,N\}$.   Assume that for all $k=t+1,\cdots,N$, we have shown $L_k^1=L_k^2=:L_k$. Let us show that $L_t^1=L_t^2$. Set $A=\{L_t^1<L_t^2\}$ and $A'=\{L_t^1>L_t^2\}$. Suppose, by way of contradiction, that $P(A)>0$. Since $A\in\mathcal{F}_t$,  we have, for $i=1,2$,
	\begin{align*}
		X_tI_A=&I_A\mathcal{E}_t[f(t,L_t^i)+\sum_{k=t+1}^N f(k,L_k^i\vee (\max_{t+1\leq v\leq k}L_v))]\\
		=&\mathcal{E}_t[f(t,L_t^i)I_A+\sum_{k=t+1}^N I_Af(k,L_k^i\vee (\max_{t+1\leq v\leq k}L_v))],
	\end{align*}
	where we have used the zero-one law for conditional $g$-expectation (property (4)).
	On the set $A$, since $f$ is strictly decreasing, we have $f(t,L_t^1)>f(t,L_t^2)$ and
	\begin{displaymath}
		\sum_{k=t+1}^N f(k,L_k^1\vee (\max_{t+1\leq v\leq k}L_v))\geq \sum_{k=t+1}^N f(k,L_k^2\vee (\max_{t+1\leq v\leq k}L_v)).
	\end{displaymath}
	By the strict comparison theorem for conditional $g$-expectations, we get that, on the set $A$,
	\begin{align*}
		&\mathcal{E}_t[f(t,L_t^1)I_A+\sum_{k=t+1}^N I_Af(k,L_k^1\vee (\max_{t+1\leq v\leq k}L_v))]\\
		>	&\mathcal{E}_t[f(t,L_t^2)I_A+\sum_{k=t+1}^N I_Af(k,L_k^2\vee (\max_{t+1\leq v\leq k}L_v))],
	\end{align*} which is  a contradiction.
	We conclude that $P(A)=0$. By interchanging the roles of $L^1$ and $L^2$ in the above reasoning, we get that $P(A')=0$. Hence, the uniqueness is shown.
	
	
 		We now show the existence. We proceed by backward induction. It is easy to check that $L_{N}$ defined by $L_{N}=f^{-1}(N,X_{N})$ is a solution to the stochastic representation problem  at the terminal time $N$ and that $f(N,L_N)$ is square-integrable. Let $t\in\{0,\ldots, N\}$.   Suppose  that we have shown the existence of an adapted process  $\{L_k\}_{k=t+1,\cdots,N}$ such that $\sum_{u=k}^{N}f(u,\max_{k\leq v\leq u}L_v)$ is square-integrable, for all $k=t+1,\cdots,N$ and such that
	\begin{displaymath}
		X_k=\mathcal{E}_k[\sum_{u=k}^{N}f(u,\max_{k\leq v\leq u}L_v)], \textrm{ for all } k=t+1,\cdots,N.
	\end{displaymath}
	For $k=t$, we set $\mathcal{H}_t:=\{\xi|\xi \textrm{ is }\mathcal{F}_t\textrm{-measurable}, \widetilde{f}(t,N,\xi) \textrm{ is square-integrable and }  \mathcal{E}_t[\widetilde{f}(t,N,\xi)]\leq X_t\}$, where
	\begin{displaymath}
		\widetilde{f}(t,N,\xi)=f(t,\xi)+\sum_{u=t+1}^{N}f(u,\xi\vee(\max_{t+1\leq v\leq u}L_v)).
	\end{displaymath}
	Since for each fixed $t,\omega$, $f(t,\omega,\cdot)$ is strictly decreasing from $+\infty$ to $-\infty$, by the monotone convergence theorem, we have
	\begin{displaymath}
		\lim_{M\rightarrow\infty}\mathcal{E}_t[\widetilde{f}(t,N,M)]=-\infty.
	\end{displaymath}
	Therefore, the set  $\mathcal{H}_t$ is non-empty. We define
	\begin{displaymath}
		L_t:=\essinf_{\xi\in\mathcal{H}_t}\xi.
	\end{displaymath}
	We will show that $L_t$ is a solution to the representation problem at time $t$.
	For this purpose, we first show that the set $\mathcal{H}_t$ is downward directed. Let $\xi^i\in\mathcal{H}_t$, for $i=1,2$. Set
	\begin{displaymath}
		\xi=\xi^1 I_B+\xi^2 I_{B^c},
	\end{displaymath}
	where $B=\{\xi^1\leq \xi^2\}\in\mathcal{F}_t$. It is easy to check that
	\begin{displaymath}
		\mathcal{E}_t[\widetilde{f}(t,N,\xi)]=\mathcal{E}_t[\widetilde{f}(t,N,\xi^1)]I_B+\mathcal{E}_t[\widetilde{f}(t,N,\xi^2)]I_{B^c}\leq X_t,
	\end{displaymath}
	which yields that $\xi\in\mathcal{H}_t$. Hence, the set $\mathcal{H}_t$ is downward directed.  Therefore, there exists a decreasing sequence $\{\xi_n\}\subset \mathcal{H}_t$ such that $L_t=\lim_{n\rightarrow\infty}\xi_n$. By the monotone convergence theorem, we deduce that
	\begin{displaymath}
		\mathcal{E}_t[\sum_{u=t}^{N}f(u,\max_{t\leq v\leq u}L_v)]=\mathcal{E}_t[\widetilde{f}(t,N,L_t)]=\lim_{n\rightarrow\infty}\mathcal{E}_t[\widetilde{f}(t,N,\xi_n)]\leq X_t,
	\end{displaymath}
	which implies that $L_t\in\mathcal{H}_t$. Set $C=\{\mathcal{E}_t[\widetilde{f}(t,N,L_t)]<X_t\}\in\mathcal{F}_t$. In order to conclude, it is sufficient to show that $P(C)=0$.  Suppose, by way of contradiction, that,  $P(C)=\varepsilon>0$.   For each $n\in\mathbb{N}$,  we define
	\begin{displaymath}
		\zeta_n=L_t I_{C^c}+(L_t-\frac{1}{n})I_C.
	\end{displaymath}
	It is easy to check that $\zeta_n\uparrow L_t$ and
	\begin{displaymath}
		\mathcal{E}_t[\widetilde{f}(t,N,\zeta_n)]I_C\downarrow \mathcal{E}_t[\widetilde{f}(t,N,L_t)]I_C<X_tI_C.
	\end{displaymath}
	By Lusin's theorem, 
there exist some $\mathcal{F}_t$-measurable open sets $\{O^\varepsilon_n\}_{n=1}^\infty$ and $O^\varepsilon$ with $P(O^\varepsilon_n)\leq \frac{\varepsilon}{2^{n+2}}$ and $P(O^\varepsilon)\leq \frac{\varepsilon}{8}$, such that $\mathcal{E}_t[\widetilde{f}(t,N,\zeta_n)]I_C I_{(O^\varepsilon_n)^c}$ and $\mathcal{E}_t[\widetilde{f}(t,N,L_t)]I_CI_{(O^\varepsilon)^c}$ are continuous. Set $O=(\cup_{n=1}^\infty O^\varepsilon_n)\cup O^\varepsilon$. It is easy to check that $P(O)\leq \frac{3}{8}\varepsilon$ and $\mathcal{E}_t[\widetilde{f}(t,N,\zeta_n)]I_C I_{O^c}$ and $\mathcal{E}_t[\widetilde{f}(t,N,L_t)]I_C I_{O^c}$ are continuous. By Dini's theorem, $\mathcal{E}_t[\widetilde{f}(t,N,\zeta_n)]I_C I_{O^c}$ converges to $\mathcal{E}_t[\widetilde{f}(t,N,L_t)]I_C I_{O^c}$ uniformly. Then there exists some $M$ independent of $\omega$, such that for any $n\geq M$, $\mathcal{E}_t[\widetilde{f}(t,N,\zeta_n)]I_C I_{O^c}\leq X_tI_C I_{O^c}$. Now let
	\begin{displaymath}
		\widetilde{\zeta}_n=L_t I_{C^c\cup O}+(L_t-\frac{1}{n})I_{C\cap O^c}=L_t I_{C^c\cup O}+\zeta_nI_{C\cap O^c}.
	\end{displaymath}
	It is easy to check that for $n\geq M$,
    	\begin{displaymath}
    \mathcal{E}_t[\widetilde{f}(t,N,\widetilde{\zeta}_n)]=\mathcal{E}_t[\widetilde{f}(t,N,L_t)]I_{C^c \cup O}+\mathcal{E}_t[\widetilde{f}(t,N,\zeta_n)]I_{C\cap O^c}\leq X_t,
    \end{displaymath}
    which implies that $\widetilde{\zeta}_n\in \mathcal{H}_t$. We claim that $P(C\cap O^c)>0$, which leads to a contradiction with the fact that $L_t$ is the essential infimum of $\mathcal{H}_t$. To show that $P(C\cap O^c)>0$, we notice that, if $P(C\cap O^c)=0$, then
    \begin{displaymath}
    	P(C\cup O^c)=P(C)+P(O^c)\geq \varepsilon+1-\frac{3}{8}\varepsilon>1,
    \end{displaymath}
	which is impossible; hence, the claim holds and this completes the proof.
\end{proof}

\begin{remark}\label{srr1}
	Consider a non-linear operator $\mathcal{E}_{t,N}:L^2(\mathcal{F}_N)\rightarrow L^2(\mathcal{F}_t)$ satisfying the following property
	\begin{description}
		\item[(I)] For any $\xi,\eta\in L^2(\mathcal{F}_N)$ with $\xi\leq \eta$, then we have $\mathcal{E}_{t,N}[\xi]\leq \mathcal{E}_{t,N}[\eta]$. Furthermore, if $P(\xi<\eta)>0$, then $\mathcal{E}_{t,N}[\xi]<\mathcal{E}_{t,N}[\eta]$;
		\item[(II)] For any $\{\xi_n\}\subset L^2(\mathcal{F}_N)$ such that $\xi_n\uparrow(\downarrow) \xi$, then we have $\mathcal{E}_{t,N}[\xi_n]\uparrow(\downarrow)\mathcal{E}_{t,N}[\xi]$;
		\item[(III)]  For any $\mathcal{F}_t$-measurable partition $\{A_n\}_{n=1}^M$ and $\{\xi_n\}_{n=1}^M\subset L^2(\mathcal{F}_N)$, we have $\mathcal{E}_{t,N}[\sum_{n=1}^M\xi_n I_{A_n}]=\sum_{n=1}^M\mathcal{E}_{t,N}[\xi_n]I_{A_n}$.
	\end{description}
	By a similar analysis to that of the proof of Theorem \ref{srt5}, it can be shown the stochastic representation problem with  $\mathcal{E}_{t,N}[\cdot]$ satisfying the above properties (I),(II), and (III), has a unique solution. This applies, in particular,  to the following two examples:
	\begin{description}
		\item[(a)] $\mathcal{E}_{t,N}[\xi]:=\mathcal{E}_{t,N}^g[\xi]=Y_t^{N,\xi}$, where $(Y^{N,\xi},Z^{N,\xi})$ is the solution of the following BSDE:
		\begin{displaymath}
		Y^{N,\xi}_t=\xi+\int_t^N g(s,Y_s^{N,\xi},Z_s^{N,\xi})ds-\int_t^T Z_s^{N,\xi}dB_s.
		\end{displaymath}
		Here, $g:[0,N]\times\Omega\times \mathbb{R}\times \mathbb{R}^d\rightarrow\mathbb{R}$ is a standard driver satisfying the following condition
		\begin{description}
			\item[(i')] For each fixed $y\in\mathbb{R}$ and $z\in\mathbb{R}^d$, $(g(t,\omega,y,z))_{t\in[0,N]}$ is progressively measurable and ,
			\begin{displaymath}
			E[\int_0^N |g(t,y,z)|^2dt]<\infty;
			\end{displaymath}
			\item[(ii')] There exists a constant $K>0$, such that
			\begin{displaymath}
			|g(t,\omega,y,z)-g(t,\omega,y',z')|\leq L(|y-y'|+|z-z'|).
			\end{displaymath}
		\end{description}
		\item[(b)] $\mathcal{E}_{t,N}[\xi]:=\alpha_t\mathcal{E}^{g}_{t,N}[\xi]+(1-\alpha_t)\mathcal{E}^{-g}_{t,N}[\xi]$, where
		$\alpha$ is a given adapted process  taking values in $[0,1]$. In this case, $\mathcal{E}_{t,N}[\cdot]$ can be seen as an extension of the alpha-maxmin conditional expectation (cf., e.g.,  \cite{BLR}).
	\end{description}
\end{remark}

\begin{remark}
	In some applications,  we need to consider the stochastic representation problem in a slightly different formulation, where equation \eqref{sr4} is replaced by the following equation: 
	\begin{displaymath}
		X_t=\mathcal{E}_t[\sum_{u=t}^{N-1}f(u,\max_{t\leq v\leq u}L_v)+X_N].
	\end{displaymath}
	Here, and in the sequel, we use the following convention: if $s<t$, for any process $h$, we define $\sum_{u=t}^s h(u)=0$.  
	By similar arguments to those of the proof of  Theorem \ref{srt5}, we can show that there exists a unique adapted solution $L=\{L_t\}_{t=0,1,\cdots,N-1}$ to this  problem.
\end{remark}

We now  establish a characterization of the solution $L$ to the stochastic representation problem \eqref{sr4}. To this purpose, we define the following sets of stopping times:
 $$\mathcal{T}_{0,N}=\{\tau|\tau\textrm{ is a stopping time taking values a.s. in } \{0,1,\cdots,N\}\},$$ 
 $$\mathcal{T}_\sigma=\{\tau\in\mathcal{T}|\tau>\sigma  \quad a.s. \textrm{ on } \{\sigma<N\}\}, \textrm{ where } \sigma\in\mathcal{T}_{0,N}.$$

\begin{proposition}\label{srp1}
	Under the Assumptions (i)-(iii) on the driver $g$ and (1)-(2) on the function $f$, the solution $L$ to the  stochastic representation problem \eqref{sr4} satisfies: For any stopping time  $\sigma\in\mathcal{{T}}_{0,N-1}$,
	\begin{equation}\label{sr}
		L_\sigma=\essinf_{\tau \in\mathcal{T}_\sigma}l_{\sigma,\tau},\ \ \textrm{a.s.,}
	\end{equation}
	where $l_{\sigma,\tau}$ is the unique $\mathcal{F}_\sigma$-measurable solution of the following equation
		\begin{equation}\label{sr1}
	X_\sigma=\mathcal{E}_\sigma[\sum_{u=\sigma}^{\tau-1}f(u,l_{\sigma,\tau})+X_\tau].
	\end{equation}
\end{proposition}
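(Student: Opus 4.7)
The plan is to establish the two inequalities $L_\sigma \leq l_{\sigma,\tau}$ for every $\tau\in\mathcal{T}_\sigma$, and $L_\sigma \geq \essinf_{\tau\in\mathcal{T}_\sigma} l_{\sigma,\tau}$, the second by exhibiting an explicit $\tau^*\in\mathcal{T}_\sigma$ at which $L_\sigma=l_{\sigma,\tau^*}$. A preliminary step is the existence and uniqueness of $l_{\sigma,\tau}$: the map $l\mapsto \mathcal{E}_\sigma[\sum_{u=\sigma}^{\tau-1}f(u,l)+X_\tau]$ from $\mathcal{F}_\sigma$-measurable random variables into $L^2(\mathcal{F}_\sigma)$ is strictly decreasing in $l$ (by the strict decrease of $f$ and the strict monotonicity of $\mathcal{E}_\sigma$), continuous (by the monotone convergence property of $\mathcal{E}_\sigma$) and ranges from $+\infty$ to $-\infty$, which guarantees the solvability of the defining equation by a unique $\mathcal{F}_\sigma$-measurable solution.

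For the first inequality, fix $\tau\in\mathcal{T}_\sigma$ and set $M_u^\sigma:=\max_{\sigma\leq v\leq u}L_v$ and $M_u^\tau:=\max_{\tau\leq v\leq u}L_v$. Starting from the stochastic representation of $X_\sigma$, I would split the summation at $\tau$ and invoke the tower property together with the translation invariance of $\mathcal{E}_\sigma$ to pull the tail (which is essentially $\mathcal{F}_\tau$-measurable) into a nested $\mathcal{E}_\tau[\cdot]$. The inequality $M_u^\sigma \geq M_u^\tau$ for $u\geq \tau$, combined with the monotonicity of $\mathcal{E}_\tau$, the decrease of $f$, and the representation at $\tau$, bounds this inner term above by $X_\tau$; using then $M_u^\sigma \geq L_\sigma$ for $u\geq\sigma$ yields
\begin{displaymath}
X_\sigma \leq \mathcal{E}_\sigma\!\left[\sum_{u=\sigma}^{\tau-1}f(u,L_\sigma)+X_\tau\right].
\end{displaymath}
Since $l_{\sigma,\tau}$ realizes equality in the analogous equation, a contradiction argument on $A:=\{L_\sigma > l_{\sigma,\tau}\}$, which patches $\hat l=L_\sigma I_A+l_{\sigma,\tau}I_{A^c}$, applies the zero-one law to decompose $\mathcal{E}_\sigma[\sum_{u=\sigma}^{\tau-1}f(u,\hat l)+X_\tau]$, and invokes the strict monotonicity of $\mathcal{E}_\sigma$ (exactly as in the uniqueness part of Theorem~\ref{srt5}), rules out $P(A)>0$ and yields $L_\sigma \leq l_{\sigma,\tau}$ a.s.

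For the reverse inequality, I would introduce
\begin{displaymath}
\tau^*:=\inf\{t>\sigma:L_t>L_\sigma\}\wedge N,
\end{displaymath}
which is visibly an element of $\mathcal{T}_\sigma$. Its construction guarantees that $M_u^\sigma=L_\sigma$ on $\{\sigma\leq u<\tau^*\}$, and that on $\{\tau^*<N\}$ one has $L_{\tau^*}>L_\sigma$, which forces $M_u^\sigma=M_u^{\tau^*}$ for all $u\geq\tau^*$. Substituting these identities into the representation of $X_\sigma$, splitting the sum at $\tau^*$, and applying the tower property with the translation invariance of $\mathcal{E}_\sigma$ to reinstate $X_{\tau^*}$ on the tail produces
\begin{displaymath}
X_\sigma=\mathcal{E}_\sigma\!\left[\sum_{u=\sigma}^{\tau^*-1}f(u,L_\sigma)+X_{\tau^*}\right],
\end{displaymath}
so that by the uniqueness just recorded, $L_\sigma=l_{\sigma,\tau^*}$. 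Combining the two directions gives the claimed characterization.

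The main obstacle is identifying the right $\tau^*$ and verifying that the running maximum $M^\sigma$ decomposes cleanly around it; the boundary case $\{\tau^*=N\}$ (on which $L_{\tau^*}$ need not exceed $L_\sigma$) presents no extra difficulty because the summation convention makes the tail $\sum_{u=N}^{N-1}$ vacuous, so the tower-property step and the passage to $X_{\tau^*}=X_N$ go through uniformly in both cases.
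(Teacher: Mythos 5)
Your preliminary step and your Step 1 coincide with the paper's own argument (same splitting of the sum at $\tau$, tower property plus translation invariance, monotonicity of $f$, and a strict-comparison/zero-one-law contradiction on $\{l_{\sigma,\tau}<L_\sigma\}$), and they are fine. The genuine gap is in Step 2, at exactly the boundary case you dismiss. On the event where $L$ never rises strictly above $L_\sigma$ after $\sigma$, your $\tau^*$ equals $N$ only by truncation, and $\max_{\sigma\le v\le N}L_v=L_\sigma$, which may be strictly larger than $L_N$. Under formulation \eqref{sr4} the last term of the representation is then $f(N,\max_{\sigma\le v\le N}L_v)=f(N,L_\sigma)$, whereas $X_{\tau^*}=X_N=f(N,L_N)$; these differ, so the tail is not the ``vacuous sum $\sum_{u=N}^{N-1}$'' --- that reading tacitly replaces \eqref{sr4} by the alternative formulation $X_t=\mathcal{E}_t[\sum_{u=t}^{N-1}f(u,\max_{t\le v\le u}L_v)+X_N]$ of the remark following Theorem \ref{srt5}. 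The failure is not a technicality: take $N=1$, $g\equiv 0$, $f(t,l)=-l$, $X_0=0$, $X_1=1$; then $L_1=-1$, $L_0=0$, $\tau^*=1$, and $\mathcal{E}_0[f(0,L_0)+X_1]=1\neq 0=X_0$, while $l_{0,1}=1>L_0$. So the identity $L_\sigma=l_{\sigma,\tau^*}$ you aim for (and even \eqref{sr} read literally with \eqref{sr4} and \eqref{sr1}) fails on this event.

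For comparison, the paper's Step 2 does not use a single hitting time: it takes $\tau^n:=\inf\{t\ge\sigma\mid\max_{\sigma\le v\le t}L_v>L_\sigma+\tfrac1n\}\wedge N$, deduces $L_\sigma+\tfrac1n\ge l_{\sigma,\tau^n}\ge\essinf_{\tau\in\mathcal{T}_\sigma}l_{\sigma,\tau}$ and lets $n\to\infty$; your single $\tau^*$ with exact attainment is in substance the paper's Proposition \ref{srp2}. Note, however, that both of those arguments also rest on the identity $\max_{\sigma\le v\le u}L_v=\max_{\tau\le v\le u}L_v$ for $u\ge\tau$, which is justified only on $\{\tau<N\}$, so they are exposed to the same truncation event; the example above shows the characterization should really be read with the terminal convention of the remark (representation carrying $+X_N$), under which your case $\{\tau^*=N\}$ is indeed harmless (the inner conditional expectation is literally $X_N$) and the rest of your Step 2 goes through verbatim. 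In short: your route is the natural one and essentially matches the paper (it even yields attainment, as in Proposition \ref{srp2}), but as written the treatment of $\{\tau^*=N\}$ is wrong for \eqref{sr4}; you must either work with the $+X_N$ formulation or supply a separate argument on that event.
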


\begin{proof}
	Preliminary Step. By modifying the proof of Theorem \ref{srt5}, we can show that there exists a unique solution $l_{\sigma,\tau}$ to Equation \eqref{sr1}. Thus, it remains to  prove  \eqref{sr}.
	
	Step 1. Let  $\sigma\in\mathcal{{T}}_{0,N-1}$ be a given stopping time and let $\tau\in\mathcal{T}_\sigma$. By Equation \eqref{sr4} and the decreasing property of $f$, we  get
	\begin{align*}
		X_\sigma&=\mathcal{E}_\sigma[\sum_{u=\sigma}^{\tau-1}f(u,\sup_{\sigma\leq v\leq u}L_v)+\mathcal{E}_\tau[\sum_{u=\tau}^{N}f(u,\sup_{\sigma\leq v\leq u}L_v)]]\\
		&\leq \mathcal{E}_\sigma[\sum_{u=\sigma}^{\tau-1}f(u,L_\sigma)+\mathcal{E}_\tau[\sum_{u=\tau}^{N}f(u,\sup_{\tau\leq v\leq u}L_v)]]\\
		&=\mathcal{E}_\sigma[\sum_{u=\sigma}^{\tau-1}f(u,L_\sigma)+X_\tau].
	\end{align*}
	By Equation \eqref{sr1}, it follows that
	\begin{displaymath}
		\mathcal{E}_\sigma[\sum_{u=\sigma}^{\tau-1}f(u,l_{\sigma,\tau})+X_\tau]\leq \mathcal{E}_\sigma[\sum_{u=\sigma}^{\tau-1}f(u,L_\sigma)+X_\tau].
	\end{displaymath}
	We set $A=\{l_{\sigma,\tau}<L_\sigma\}\in\mathcal{F}_\sigma$. We claim that $P(A)=0$. Suppose, by way of contradiction, that $P(A)>0$. By the strictly decreasing property of $f$, we have  that
	\begin{displaymath}
	\mathcal{E}_\sigma[\sum_{u=\sigma}^{\tau-1}f(u,l_{\sigma,\tau})+X_\tau]I_A> \mathcal{E}_\sigma[\sum_{u=\sigma}^{\tau-1}f(u,L_\sigma)+X_\tau]I_A,
	\end{displaymath}
	which is a contradiction. We deduce that $P(A)=0$, that is,  $L_\sigma\leq l_{\sigma,\tau}$ a.s. As $\tau$ is arbitrary in $\mathcal{T}_\sigma$, we get
	\begin{displaymath}
		L_\sigma\leq \essinf_{\tau \in\mathcal{T}_\sigma}l_{\sigma,\tau}.
	\end{displaymath}
	
	Step 2. We now show the converse inequality. For each fixed $\sigma\in\mathcal{T}_{0,N-1}$, and for each $n\in\mathbb{N^*}$, consider the following stopping time
	\begin{displaymath}
		\tau^n:=\inf\{t\geq \sigma|\sup_{\sigma\leq v\leq t}L_v>L_\sigma^n\}\wedge N,
	\end{displaymath}
	where
	\begin{displaymath}
		L_\sigma^n:=(L_\sigma+\frac{1}{n})I_{\{L_\sigma>-\infty\}}-nI_{\{L_\sigma=-\infty\}}.
	\end{displaymath}
	It is easy to check that $\tau^n\in\mathcal{T}_\sigma$. Besides, note that on the set $\{\tau^n<N\}$, we have $L_{\tau^n}=\sup_{\sigma\leq v\leq \tau^n}L_v$, which yields that for any $t\in\{\tau^n,\tau^n+1,\cdots,N\}$
	\begin{displaymath}
		\sup_{\sigma\leq v\leq t}L_v=\sup_{\tau^n\leq v\leq t}L_v.
	\end{displaymath}
	Therefore, we obtain that
	\begin{align*}
		X_\sigma&=\mathcal{E}_\sigma[\sum_{u=\sigma}^{\tau^n-1}f(u,\sup_{\sigma\leq v\leq u}\L_v)+\mathcal{E}_{\tau^n}[\sum_{u=\tau^n}^{N}f(u,\sup_{\tau^n\leq v\leq u}L_v)]]\\
		&\geq \mathcal{E}_\sigma[\sum_{u=\sigma}^{\tau^n-1}f(u,L_\sigma^n)+X_{\tau^n}].
	\end{align*}
	Combining with Equation \eqref{sr1}, it follows that
	\begin{displaymath}
		\mathcal{E}_\sigma[\sum_{u=\sigma}^{\tau^n-1}f(u,l_{\sigma,\tau^n})+X_{\tau^n}]\geq \mathcal{E}_\sigma[\sum_{u=\sigma}^{\tau^n-1}f(u,L_\sigma^n)+X_{\tau^n}].
	\end{displaymath}
	Similar analysis to that of Step 1 shows that
	\begin{displaymath}
		L_\sigma^n\geq l_{\sigma,\tau^n}\geq \essinf_{\tau \in\mathcal{T}_\sigma}l_{\sigma,\tau}.
	\end{displaymath}
	Letting $n\rightarrow\infty$, we get the desired result.
\end{proof}

The following proposition establishes that an optimal stopping time exists.
\begin{proposition}\label{srp2}
	For any $k=2,\cdots, N$, set
	\begin{displaymath}
	\tau_{N-k}^*(\omega)=\begin{cases}
	N-k+1, &\omega \in \{L_{N-k}<L_{N-k+1}\};\\
	N-k+i, &\omega \in \{\max_{j=1,\cdots,i-1} L_{N-k+j}\leq L_{N-k}<L_{N-k+i}\}, i=2,\cdots,k-1;\\
	N, &\omega \in\{\max_{j=1,\cdots,k-1} L_{N-k+j}\leq L_{N-k}\}.
	\end{cases}
	\end{displaymath}
	And let $\tau_{N-1}^*=N$. For each $t=0,1,\cdots,N-1$, the stopping times $\tau^*_t$ is optimal in the sense that
	\begin{displaymath}
		L_t=\essinf_{\tau\in\mathcal{T}_t}l_{t,\tau}=l_{t,\tau_t^*}.
	\end{displaymath}
\end{proposition}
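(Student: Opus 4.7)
Since Proposition \ref{srp1} already identifies $L_t$ with $\essinf_{\tau\in\mathcal{T}_t}l_{t,\tau}$, the only new content is the second equality $L_t=l_{t,\tau^*_t}$. The plan is to establish
\[
X_t=\mathcal{E}_t\!\left[\sum_{u=t}^{\tau^*_t-1}f(u,L_t)+X_{\tau^*_t}\right]
\]
and then invoke uniqueness of the $\mathcal{F}_t$-measurable solution of equation~\eqref{sr1} (the Preliminary Step in the proof of Proposition~\ref{srp1}) to read off $L_t=l_{t,\tau^*_t}$.

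The key observation is that the explicit definition of $\tau^*_t$ is tailored precisely so that the running supremum in \eqref{sr4} simplifies. On each defining event $\{\tau^*_t=s\}$ with $s\in\{t+1,\ldots,N\}$, the inequalities $L_v\leq L_t$ for $v\in\{t+1,\ldots,s-1\}$ give $\max_{t\leq v\leq u}L_v=L_t$ for every $u\in\{t,\ldots,s-1\}$; moreover, when $s<N$, the strict jump $L_s>L_t$ forces $\max_{t\leq v\leq u}L_v=\max_{s\leq v\leq u}L_v$ for $u\in\{s,\ldots,N\}$. Starting from the representation $X_t=\mathcal{E}_t[\sum_{u=t}^N f(u,\max_{t\leq v\leq u}L_v)]$ and splitting at the stopping time $\tau^*_t$ via the tower property of the conditional $g$-expectation, the outer portion collapses to $\sum_{u=t}^{\tau^*_t-1}f(u,L_t)$, while on $\{\tau^*_t<N\}$ the inner $\mathcal{E}_{\tau^*_t}[\cdot]$ reduces, by the representation applied at time $\tau^*_t$, to $X_{\tau^*_t}$.

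The delicate point I anticipate is the ``capped'' case $\{\tau^*_t=N\}$: here the inner conditional expectation reduces to the single term $f(N,\max_{t\leq v\leq N}L_v)$, which must be reconciled with $X_{\tau^*_t}=X_N=f(N,L_N)$. Handling this case requires a sub-analysis according to whether $L_N$ exceeds $L_t$ or not, and careful use of the zero--one law (Property~(4) applied at $\tau^*_t=N$) over the $\mathcal{F}_{\tau^*_t}$-measurable partition generated by the defining events $\{\tau^*_t=s\}$, in order to glue the various simplifications into the single clean identity for $X_t$ written above. Once that identity is in hand, the uniqueness of the $\mathcal{F}_t$-measurable solution $l_{t,\tau^*_t}$ to \eqref{sr1} finishes the argument.
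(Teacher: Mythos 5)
Your overall route coincides with the paper's: split the representation \eqref{sr4} at the stopping time $\tau^*_t$ via the tower property, use that the running maximum equals $L_t$ strictly before $\tau^*_t$ and restarts at $\tau^*_t$ to collapse the expression to $X_t=\mathcal{E}_t[\sum_{u=t}^{\tau^*_t-1}f(u,L_t)+X_{\tau^*_t}]$, and then identify $L_t$ with $l_{t,\tau^*_t}$ (the paper does this last step by repeating the strict-comparison argument of Proposition \ref{srp1}, you by invoking uniqueness of the $\mathcal{F}_t$-measurable solution of \eqref{sr1}; these are equivalent). On the event $\{\tau^*_t<N\}$ your simplification of the running maximum is exactly the paper's.

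The genuine gap is the capped case $\{\tau^*_t=N\}$, which you correctly single out as delicate but leave unresolved, and the sub-analysis you sketch cannot close it in the formulation \eqref{sr4} as written. On $\{\tau^*_t=N\}$ the time-$t$ representation contributes the terminal term $f(N,\max_{t\le v\le N}L_v)$; on the part of this event where $L_N<L_t$ this equals $f(N,L_t)$, which (since $f$ is strictly decreasing) is strictly smaller than $X_{\tau^*_t}=X_N=f(N,L_N)$, so the identity you aim for simply fails there --- no use of the zero--one law can repair it, because the offending event is not $\mathcal{F}_t$-measurable and the discrepancy is real rather than a bookkeeping issue. Concretely, with $g\equiv 0$, $f(u,l)=-l$, $N=2$ and $L_0=0$, $L_1=-1$, $L_2=-5$, one gets $X_0=0$, $X_1=2$, $X_2=5$, $\tau^*_0=2$, and the equation $X_0=\mathcal{E}_0[\sum_{u=0}^{1}f(u,l)+X_2]$ gives $l_{0,\tau^*_0}=5/2\neq 0=L_0$. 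Note that the paper's own proof passes over this point by asserting $\max_{t\le v\le u}L_v=\max_{\tau^*_t\le v\le u}L_v$ on $\{\tau^*_t\le u\}$, which for $u=\tau^*_t=N$ amounts to assuming $L_N\ge L_t$ there; both your argument and the paper's go through verbatim when the representation is taken in the variant form of the Remark following Theorem \ref{srt5}, namely $X_t=\mathcal{E}_t[\sum_{u=t}^{N-1}f(u,\max_{t\le v\le u}L_v)+X_N]$, where the term received at $\tau^*_t=N$ is $X_N$ itself and the capped case is harmless (this is also the form used in the applications of Section 3). So either place yourself in that formulation, or add a hypothesis guaranteeing that the running maximum at time $N$ is attained at $L_N$ on $\{\tau^*_t=N\}$; as it stands, the ``glueing'' step you defer cannot be completed.
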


\begin{proof}
	The result is trivial for the case when $t=N-1$. For the other cases, it is sufficient to prove that $P(L_t<l_{t,\tau_t^*})=0$. By the definition of $\tau_t^*$, we can check that
	\begin{displaymath}
		\max_{t\leq v\leq u}L_v(\omega)=\begin{cases}
		\max_{\tau_t^*\leq v\leq u}L_v(\omega), &\omega\in\{\tau^*_t\leq u\},\\
		L_t(\omega), &\omega\in\{u<\tau^*_t\}.
		\end{cases}
	\end{displaymath}
	Therefore, we have
	\begin{align*}
		\mathcal{E}_t[\sum_{u=t}^{\tau_t^*-1}f(u,l_{t,\tau^*_t})+X_{\tau^*_t}]=X_t=&\mathcal{E}_t[\sum_{u=t}^{\tau_t^*-1}f(u,\max_{t\leq v\leq u}L_v)+\mathcal{E}_{\tau^*_t}[\sum_{u=\tau^*_t}^{N}f(u,\max_{t\leq v\leq u}L_v)]]\\
		=&\mathcal{E}_t[\sum_{u=t}^{\tau_t^*-1}f(u,L_t)+\mathcal{E}_{\tau^*_t}[\sum_{u=\tau^*_t}^{N}f(u,\max_{\tau^*_t\leq v\leq u}L_v)]]\\
		=&\mathcal{E}_t[\sum_{u=t}^{\tau_t^*-1}f(u,L_t)+X_{\tau^*_t}].
	\end{align*}
	By a similar analysis as in the proof of Proposition \ref{srp1}, we finally have $P(L_t<l_{t,\tau_t^*})=0$. Hence, the result follows.
\end{proof}

\begin{remark}
	Modifying the proof slightly, similar results still hold (e.g., existence and uniqueness, characterization) if, instead of being strictly decreasing, $f(t,\omega,\cdot)$ is\emph{ strictly increasing} from $-\infty$ to $+\infty$ for each fixed $t$ and $\omega$.
\end{remark}


\section{Applications}

In this section, we present some applications of the stochastic representation problem under $g$-expectation. Throughout this section, we assume that the driver $g$ satisfies conditions (i)-(iii).

\subsection{Optimal stopping under $g$-expectation}

We present a  new approach to the non-linear optimal stopping problem in discrete time. 
This approach is based on the stochastic representation of the given  reward process $X$, established in the previous section.  
This approach can be seen as a non-linear analogue of the  approach presented by  Bank and Follmer \cite{BF} in the linear case.

The following theorem provides a  level-crossing principle  and an optimality criterion for stopping times.

\begin{theorem}(Level-crossing principle and optimality criterion)\label{srt1}
	Let $X=\{X_n\}_{n=0,1,\cdots,N}$ be an adapted and square-integrable sequence and $L=\{L_t\}_{t=0,1,\cdots,N-1}$ be the solution of the following backward equation
	\begin{displaymath}
		X_t=\mathcal{E}_t[\sum_{u=t}^{N-1}\max_{t\leq v\leq u} L_v+X_N].
	\end{displaymath}
	Then, the level-passage times
	\begin{displaymath}
		\underline{\tau}:=\min\{v\geq 0|L_v\geq 0\}\wedge N \textrm{ and } \bar{\tau}:=\min\{v\geq 0|L_v>0\}\wedge N
	\end{displaymath}
	are optimal for the problem
	\begin{displaymath}
		V=\sup_{\tau\in \mathcal{T}_{0,N}}\mathcal{E}[X_\tau].
	\end{displaymath}
	
	Furthermore, if $\tau^*\in\mathcal{T}_{0,N}$ satisfies
	\begin{equation}\label{sr9}
		\underline{\tau}\leq \tau^*\leq \bar{\tau},\textrm{ and } \max_{0\leq v\leq \tau^*}L_v=L_{\tau^*},
	\end{equation}
	then, $\tau^*$ is  an optimal stopping time.
\end{theorem}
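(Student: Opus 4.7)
The plan is to reduce the entire argument to a pathwise comparison of the integrand in the representation. Starting from
\[
X_t=\mathcal{E}_t\!\left[\sum_{u=t}^{N-1}\max_{t\leq v\leq u}L_v+X_N\right],
\]
I would first extend this identity to stopping times $\tau\in\mathcal{T}_{0,N}$ by the zero--one law (Property~(4)), and then apply the tower property to obtain
\[
\mathcal{E}[X_\tau]=\mathcal{E}\!\left[X_N+S(\tau)\right],\qquad S(\tau):=\sum_{u=\tau}^{N-1}\max_{\tau\leq v\leq u}L_v.
\]
By monotonicity of $\mathcal{E}$, proving optimality of a stopping time thus reduces to proving pathwise inequalities between the random variables $S(\cdot)$.

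To prove the optimality of $\underline{\tau}$, I would establish $S(\tau)\leq S(\underline{\tau})$ a.s.\ by a case analysis. If $\tau\geq\underline{\tau}$: for every $u\geq\tau$ the term $\max_{\tau\leq v\leq u}L_v$ is dominated by $\max_{\underline{\tau}\leq v\leq u}L_v$, while the extra terms of $S(\underline{\tau})$ corresponding to indices $\underline{\tau}\leq u<\tau$ are nonnegative because $\max_{\underline{\tau}\leq v\leq u}L_v\geq L_{\underline{\tau}}\geq 0$. If $\tau<\underline{\tau}$: for $u<\underline{\tau}$ the summands of $S(\tau)$ are strictly negative (as $L_v<0$ for $v<\underline{\tau}$), while for $u\geq\underline{\tau}$ the strict inequality $\max_{\tau\leq v<\underline{\tau}}L_v<0\leq L_{\underline{\tau}}$ forces $\max_{\tau\leq v\leq u}L_v=\max_{\underline{\tau}\leq v\leq u}L_v$. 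Monotonicity of $\mathcal{E}$ then yields $\mathcal{E}[X_\tau]\leq\mathcal{E}[X_{\underline{\tau}}]$, so $\underline{\tau}$ is optimal. The same scheme, with $\bar{\tau}$ replacing $\underline{\tau}$ (using $L_v\leq 0$ for $v<\bar{\tau}$ and $L_{\bar{\tau}}>0$ on $\{\bar{\tau}<N\}$), shows $\bar{\tau}$ is optimal as well, whence $V=\mathcal{E}[X_{\underline{\tau}}]=\mathcal{E}[X_{\bar{\tau}}]$.

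For a stopping time $\tau^*$ satisfying \eqref{sr9}, I would strengthen the previous comparison to the pathwise identity $S(\tau^*)=S(\underline{\tau})$, splitting $S(\underline{\tau})$ at $u=\tau^*$. For indices $\underline{\tau}\leq u<\tau^*\leq\bar{\tau}$, the inequality $L_v\leq 0$ on $[\underline{\tau},\bar{\tau})$ together with $L_{\underline{\tau}}\geq 0$ forces $L_{\underline{\tau}}=0$ and hence $\max_{\underline{\tau}\leq v\leq u}L_v=0$; for $u\geq\tau^*$, the hypothesis $\max_{0\leq v\leq\tau^*}L_v=L_{\tau^*}$ gives $L_{\tau^*}\geq\max_{\underline{\tau}\leq v<\tau^*}L_v$, so $\max_{\underline{\tau}\leq v\leq u}L_v=\max_{\tau^*\leq v\leq u}L_v$. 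Adding the two contributions yields $S(\underline{\tau})=S(\tau^*)$, and hence $\mathcal{E}[X_{\tau^*}]=V$.

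The main difficulty, relative to the linear Bank--F\"{o}llmer argument, is the lack of additivity of $\mathcal{E}$: one cannot separately estimate a ``$\max L_v$--penalty'' and an ``$X_\tau$--reward'' in a variational bound. This is precisely why the whole proof is pushed to the pathwise level of the integrand $S(\tau)$, after which only the monotonicity and tower property of the nonlinear expectation are invoked.
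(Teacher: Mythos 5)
Your proposal is correct and follows essentially the same route as the paper: both reduce the problem to pathwise comparisons of the running-maximum sums $S(\cdot)$ inside the representation (extended to stopping times via the zero-one law and tower property) and then invoke only the monotonicity of $\mathcal{E}$. The only cosmetic difference is that the paper routes the comparison through the majorant $\sum_{u}(\max_{0\le v\le u}L_v)\vee 0$ and deduces the optimality of $\underline{\tau}$ from the chain for $\bar{\tau}$, whereas you argue by direct case analysis on the position of $\tau$ relative to $\underline{\tau}$ and $\bar{\tau}$ and establish the pathwise equality $S(\tau^*)=S(\underline{\tau})$ instead of an inequality.
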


\begin{proof}
	For any $\tau\in\mathcal{T}_{0,N}$, it is easy to check that
	\begin{equation}\label{sr7}\begin{split}
			\mathcal{E}[X_\tau]=&\mathcal{E}[\sum_{u=\tau}^{N-1}\max_{\tau\leq v\leq u}L_v+X_N]\leq \mathcal{E}[\sum_{u=\tau}^{N-1}(\max_{0\leq v\leq u}L_v)\vee 0+X_N]\\
		\leq &\mathcal{E}[\sum_{u=\bar{\tau}}^{N-1}(\max_{0\leq v\leq u}L_v)\vee 0+X_N]=\mathcal{E}[\sum_{u=\bar{\tau}}^{N-1}\max_{0\leq v\leq u}L_v+X_N].
		\end{split}
	\end{equation}
	Noting that for any $\bar{\tau}\leq N-1$ and $u\geq \bar{\tau}$, we have
	\begin{equation}\label{sr8}
		\max_{0\leq v\leq u}L_v=\max_{\bar{\tau}\leq v\leq u}L_v=\max_{\underline{\tau}\leq v\leq u}L_v\geq 0.
	\end{equation}
	Combining Equation \eqref{sr7} and \eqref{sr8} yields that
	\begin{displaymath}
		\mathcal{E}[X_\tau]\leq \mathcal{E}[X_{\bar{\tau}}],\textrm{ for any }\tau\in\mathcal{T}_{0,N}.
	\end{displaymath}
	Therefore, $\bar{\tau}$ is optimal. Besides, the Equation \eqref{sr7} and \eqref{sr8} show that for any $\tau\in \mathcal{T}_{0,N}$,
	\begin{displaymath}
		\mathcal{E}[X_\tau]\leq \mathcal{E}[\sum_{u=\bar{\tau}}^{N-1}\max_{\bar{\tau}\leq v\leq u}L_v+X_N]\leq \mathcal{E}[\sum_{u=\underline{\tau}}^{\bar{\tau}-1}\max_{\underline{\tau}\leq v\leq u}L_v+ \sum_{u=\bar{\tau}}^{N-1}\max_{\underline{\tau}\leq v\leq u}L_v+X_N]=\mathcal{E}[X_{\underline{\tau}}],
	\end{displaymath}
	which implies that $\underline{\tau}$ is also optimal.
	
	Now if $\tau^*$ satisfies \eqref{sr9}, we claim that
	\begin{equation}\label{I-II}
		I:=\sum_{u=\bar{\tau}}^{N-1}\max_{0\leq v\leq u}L_v\leq \sum_{u=\tau^*}^{N-1}\max_{0\leq v\leq u}L_v=:II.
	\end{equation}
	If $\underline{\tau}=N$, then $\tau^*=\bar{\tau}=N$, which means that $I=II=0$. If $\bar{\tau}=\tau^*$, it is obvious that $I=II$. For the case that $\underline{\tau}\leq N-1$ and $\tau^*<\bar{\tau}$, we derive that
	\begin{displaymath}
		\sum_{u=\tau^*}^{\bar{\tau}-1}\max_{0\leq v\leq u}L_v\geq \sum_{u=\tau^*}^{\bar{\tau}-1} L_{\underline{\tau}}\geq 0.
	\end{displaymath}
	Consequently, we obtain that $I\leq II$. Hence the claim holds true. By the condition that $\max_{0\leq v\leq \tau^*}L_v=L_{\tau^*}$ and combining Equations \eqref{sr7}, \eqref{I-II}, it follows that for any $\tau\in\mathcal{T}_{0,N}$,
	\begin{displaymath}
		\mathcal{E}[X_\tau]\leq \mathcal{E}[\sum_{u=\tau^*}^{N-1}\max_{0\leq v\leq u}L_v+X_N]=\mathcal{E}[\sum_{u=\tau^*}^{N-1}\max_{\tau^*\leq v\leq u}L_v+X_N]=\mathcal{E}[X_{\tau^*}].
	\end{displaymath}
	Thus we get the optimality of $\tau^*$.
\end{proof}

\subsubsection{Optimal stopping with $g$-expectation on an infinite horizon}
Here, we present a similar result to Theorem \ref{srt1} for the infinite time case. To this purpose, we first recall some properties of BSDEs with infinite time horizon. Consider the following BSDEs with infinite time horizon:
\begin{equation}\label{infinite}
Y_t=\xi+\int_t^\infty \hat{g}(s,Z_s)ds-\int_t^\infty Z_s d B_s,
\end{equation}
where $\xi\in L^2(\mathcal{F}_\infty)$, which is the collection of all $\mathcal{F}_\infty$-measurable and square-integrable random variables and $\hat{g}$ is a map from $[0,\infty)\times\Omega\times\times \mathbb{R}^d$ onto $\mathbb{R}$ satisfying the following two conditions
\begin{description}
\item[(a)] $\hat{g}(\cdot,z)$ is progressively measurable and $\hat{g}(t,0)=0$ for any $t\in[0,\infty)$;
\item[(b)] There exists a positive deterministic function $u(t)$ such that, for any $z,z'\in\mathbb{R}^d$,
$$|\hat{g}(t,z)-\hat{g}(t,z')|\leq u(t)|z-z'|,\ \ t\in[0,\infty),$$
and $\int_0^\infty u^2(t)dt <\infty$.
\end{description}
By  \cite{Chen}, there exists a unique solution $(Y,Z)\in S^2\times H^2$ satisfying the BSDE \eqref{infinite}, where
\begin{align*}
& S^2:=\{Y|Y_t,0\leq t\leq \infty, \textrm{ is an }\mathcal{F}_t\textrm{-adpated process such that } E[\sup_{t\in[0,\infty]}|Y_t|^2]<\infty\},\\
& H^2:=\{Z|Z_t,0\leq t\leq \infty, \textrm{ is an }\mathcal{F}_t\textrm{-adpated process such that } E[\int_0^\infty|Z_t|^2dt]<\infty\}.
\end{align*}

We define the $\hat{g}$-conditional expectation of $\xi\in L^2(\mathcal{F}_\infty)$ as follows
\begin{displaymath}
\hat{\mathcal{E}}_t[\xi]=Y_t,
\end{displaymath}
where $Y$ is the solution to BSDE \eqref{infinite}. For simplicity, we denote $\hat{\mathcal{E}}_0[\xi]$ by $\hat{\mathcal{E}}[\xi]$. By the results in \cite{HLW}, comparison theorem still holds for $\hat{\mathcal{E}}$. Besides, it is easy to check that $\hat{g}$-expectation also satisfies time-consistency and translation invariance property. Similar analysis to that of  the proof of Theorem \ref{srt1} leads to the following result.
\begin{proposition}\label{srt3}
	Suppose that the adapted process $X=\{X_n\}_{n\in\mathbb{N}}$ with $E[\sup_{n\in\mathbb{N}}|X_n|^2]<\infty$ has the following representation:
	\begin{displaymath}
	X_\tau=\hat{\mathcal{E}}_\tau[\sum_{u=\tau}^{\infty}\sup_{\tau\leq v\leq u}L_v], \textrm{ for any }\tau\in\mathcal{T}_{\infty},
	\end{displaymath}
	where $L=\{L_n\}_{n\in\mathbb{N}}$ is adapted and $\sum_{u=\tau}^{\infty}\sup_{\tau\leq v\leq u}L_v$ is square-integrable for any $\tau\in\mathcal{T}_\infty$. Here, $\mathcal{T}_\infty$ is the collection of all stopping times taking values in $\mathbb{N}$. Then, the level passage times
	\begin{displaymath}
	\underline{\tau}=\inf\{t\geq 0|L_t\geq 0\},\ \ \bar{\tau}=\inf\{t\geq 0|L_t>0\}
	\end{displaymath}
	maximize the expected reward $\hat{\mathcal{E}}[X_\tau]$ over all $\tau\in\mathcal{T}_\infty$.
	
	Furthermore, if the stopping time $\tau^*$ satisfies the following condition
	\begin{displaymath}
	\underline{\tau}\leq \tau^*\leq \bar{\tau}\textrm{ and } \sup_{0\leq v\leq \tau^*}L_v=L_{\tau^*}\textrm{ on }\{\tau^*<\infty\},
	\end{displaymath}
	then $\tau^*$ also maximize $\hat{\mathcal{E}}[X_\tau]$ over all $\tau\in\mathcal{T}_\infty$.
\end{proposition}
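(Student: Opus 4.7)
The plan is to follow the strategy of the proof of Theorem \ref{srt1}, transposed to the infinite-horizon $\hat{\mathcal{E}}$ setting. The key tools are the three properties of $\hat{\mathcal{E}}$ recalled just before the statement (the comparison theorem from \cite{HLW}, time-consistency, and translation invariance), together with the two integrability assumptions of the proposition, which ensure that every $\hat{\mathcal{E}}$ below is well-defined.

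First, I would fix an arbitrary $\tau\in\mathcal{T}_\infty$ and combine the representation at $\tau$ with time-consistency to write $\hat{\mathcal{E}}[X_\tau]=\hat{\mathcal{E}}[\sum_{u=\tau}^\infty\sup_{\tau\leq v\leq u}L_v]$. Since $\sup_{\tau\leq v\leq u}L_v\leq (\sup_{0\leq v\leq u}L_v)\vee 0$ for every $u\geq\tau$, the comparison theorem yields $\hat{\mathcal{E}}[X_\tau]\leq \hat{\mathcal{E}}[\sum_{u=\tau}^\infty(\sup_{0\leq v\leq u}L_v)\vee 0]$. The definition of $\bar{\tau}$ forces every summand with $u<\bar{\tau}$ to vanish (because $L_v\leq 0$ for $v<\bar{\tau}$), so the sum from $\tau$ is dominated by the sum from $\bar{\tau}$. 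Moreover, for $u\geq\bar{\tau}$ one has $\sup_{0\leq v\leq u}L_v=\sup_{\bar{\tau}\leq v\leq u}L_v$ (values prior to $\bar{\tau}$ are $\leq 0$, while $L_{\bar{\tau}}>0$), which combined with the representation at $\tau=\bar{\tau}$ gives $\hat{\mathcal{E}}[X_\tau]\leq\hat{\mathcal{E}}[X_{\bar{\tau}}]$, establishing optimality of $\bar{\tau}$. Optimality of $\underline{\tau}$ follows by inserting the extra non-negative terms $\sum_{u=\underline{\tau}}^{\bar{\tau}-1}\sup_{\underline{\tau}\leq v\leq u}L_v$ (each $\geq L_{\underline{\tau}}\geq 0$) and enlarging the inner index set from $[\bar{\tau},u]$ to $[\underline{\tau},u]$, exactly as in the second half of the proof of Theorem \ref{srt1}.

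For the optimality criterion on $\tau^*$, the hypothesis $\sup_{0\leq v\leq\tau^*}L_v=L_{\tau^*}$ on $\{\tau^*<\infty\}$ implies $\sup_{0\leq v\leq u}L_v=\sup_{\tau^*\leq v\leq u}L_v$ for all $u\geq\tau^*$, and the sandwich $\underline{\tau}\leq\tau^*\leq\bar{\tau}$ makes the omitted terms $\sum_{u=\tau^*}^{\bar{\tau}-1}\sup_{0\leq v\leq u}L_v$ non-negative. Re-running the chain of inequalities above with $\tau^*$ in place of $\bar{\tau}$ then pinches $\hat{\mathcal{E}}[X_\tau]$ between two quantities, each equal to $\hat{\mathcal{E}}[X_{\tau^*}]$ after one more use of the representation and time-consistency.

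The main obstacle I expect is the careful treatment of the event $\{\bar{\tau}=\infty\}$ (and similarly $\{\tau^*=\infty\}$) together with the convergence of the infinite sums. On $\{\bar{\tau}=\infty\}$ the running supremum of $L$ never becomes strictly positive, so $(\sup_{0\leq v\leq u}L_v)\vee 0\equiv 0$ and the analogue sums are all zero under the summation convention recalled in Section 2; a clean way to handle this is to localize every argument along $\{\bar{\tau}<\infty\}$ and $\{\bar{\tau}=\infty\}$ using the zero-one / local property of $\hat{\mathcal{E}}$ which follows from its BSDE construction. The uniform $L^2$ bound from $E[\sup_n|X_n|^2]<\infty$ together with the standing integrability of $\sum_{u=\tau}^\infty\sup_{\tau\leq v\leq u}L_v$ make the passage from the finite-horizon argument to the infinite-horizon version rigorous when invoking monotone convergence inside the comparison step.
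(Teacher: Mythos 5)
Your proposal is correct and follows essentially the same route as the paper: the paper gives no separate proof of this proposition, stating only that it follows by the same analysis as Theorem \ref{srt1}, which is exactly the transposition you carry out (comparison, time-consistency and translation invariance of $\hat{\mathcal{E}}$, the inequalities analogous to \eqref{sr7}--\eqref{sr8}, and the sandwich argument for $\tau^*$). Your additional care about the events $\{\bar{\tau}=\infty\}$, $\{\tau^*=\infty\}$ and the convergence of the infinite sums is a sensible tightening of what the paper leaves implicit, not a different method.
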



\subsection{A variant of Skorokhod's obstacle problem}
Let $f$ satisfy conditions (1) and (2) from Section 2.
Let us now consider the given stochastic sequence $X=\{X_n\}_{n=0}^N$ as a given  obstacle. We wish to find a pair of adapted sequences   $Y=\{Y_n\}_{n=0}^N$  and $\eta=\{\eta_n\}_{n=0}^{N-1}$, with $\eta$ an increasing process, such that
\begin{displaymath}
	Y_t=\mathcal{E}_t[\sum_{u=t}^{N-1}f(u, \eta_u)+X_N],
\end{displaymath}
and such such that $Y$ never falls below the obstacle $X$. It is easy  to check that there are infinitely many processes $Y$ and $\eta$   satisfying the above condition. The goal  is  to find the process $\eta$ which acts in a minimal way, in the sense  that it only increases when necessary (Skorokhod-type condition). This means, if $Y_\tau=X_\tau$, then $\tau$ should be a point of increase of $\eta$, that is,  $\eta_{\tau}>\eta_{\tau-1}$. If $Y_\tau>X_\tau$, the process  $\eta$ should remain the same. We show our result for the case where $f(t,l)=l$. The case of  $f$ satisfying conditions (1) and (2) can be proved similarly.

\begin{remark}
	In order to obtain the uniqueness of the solution to the obstacle problem, we  assume that $\eta_{-1}=-\infty$. Therefore, the initial time $0$ is a point of increase.
\end{remark}

\begin{theorem}\label{srt2}
	Let $X=\{X_n\}_{n=0,1,\cdots,N}$ be an adapted and square-integrable sequence and $L=\{L_t\}_{t=0,1,\cdots,N-1}$ be the unique solution of the following backward equation
	\begin{displaymath}
	X_t=\mathcal{E}_t[\sum_{u=t}^{N-1}\max_{t\leq v\leq u} L_v+X_N].
	\end{displaymath}
	\begin{description}
		\item[(i)] There exists a unique adapted square-integrable  process  $Y=\{Y_n\}_{n=0}^N$ and  a unique adapted square-integrable and nondecreasing process $\eta=\{\eta_n\}_{n=0}^{N-1}$ satisfying
		\begin{displaymath}
			Y_\tau=\mathcal{E}_\tau[\sum_{u=\tau}^{N-1}\eta_u+X_N] ,\ \  \tau\in\mathcal{T}_{0,N},
		\end{displaymath}
		and such that $Y$ dominates $X$, and $Y_\tau=X_\tau$, $P$-a.s. for any point of increase $\tau$ for $\eta$ and $\tau=N$. In fact, $\eta$ has the following representation
		\begin{displaymath}
			\eta_t=\max_{0\leq v\leq t}L_v, \textrm{ for all } t=0,1,\cdots,N-1.
		\end{displaymath}
		\item[(ii)] If the stopping time $\tau^*$ satisfies the following conditions
		\begin{displaymath}
			\underline{\tau}\leq \tau^*\leq \bar{\tau}, \ \ Y_{\tau^*}=X_{\tau^*},
		\end{displaymath}
		where $\underline{\tau}$ and $\bar{\tau}$ are the level passage times
		\begin{displaymath}
			\underline{\tau}:=\min\{v\geq 0|\eta_v\geq 0\}\wedge N \textrm{ and } \bar{\tau}:=\min\{v\geq 0|\eta_v>0\}\wedge N,
		\end{displaymath}
		then $\tau^*$ maximizes $\mathcal{E}[X_\tau]$ over all $\tau\in\mathcal{T}_{0,N}$.
	\end{description}
\end{theorem}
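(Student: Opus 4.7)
The natural candidate is to set $\eta_t := \max_{0\leq v\leq t} L_v$ and define $Y_\tau$ by the stated representation formula; my first step would be to verify that this pair satisfies all the conditions of (i). Adaptedness, square-integrability, and monotonicity of $\eta$ are inherited directly from those of $L$. For the domination $Y\geq X$, I would invoke the monotonicity of $\mathcal{E}_t$ together with the pointwise inequality $\eta_u\geq \max_{t\leq v\leq u}L_v$ (valid for $u\geq t$) and the representation of $X_t$ from Theorem \ref{srt5}; the boundary equality $Y_N = X_N$ is immediate. For the Skorokhod-type condition at a point of increase $\tau$ of $\eta$, the key observation is that $\eta_\tau>\eta_{\tau-1}$ forces $L_\tau = \max_{0\leq v\leq \tau}L_v$, and hence $\max_{0\leq v\leq u}L_v = \max_{\tau\leq v\leq u}L_v$ for every $u\geq \tau$; substituting this into the definition of $Y_\tau$ and invoking Theorem \ref{srt5} at the stopping time $\tau$ gives $Y_\tau = X_\tau$.

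Uniqueness is the step I expect to be the main obstacle. Given a second pair $(Y',\eta')$, the convention $\eta'_{-1}=-\infty$ guarantees that $t=0$ is automatically a point of increase for $\eta'$, so $Y'_0 = X_0 = Y_0$. Enumerating the (random) points of increase $0=\tau_0<\tau_1<\cdots$ of $\eta'$, and using that $\eta'$ is constant between consecutive $\tau_k$'s, I would proceed by forward induction to show that $\eta'_{\tau_k}=\eta_{\tau_k}$ for every $k$. At each step, comparing the two identities $Y_{\tau_k}=X_{\tau_k}$ and $Y'_{\tau_k}=X_{\tau_k}$, and exploiting the strict monotonicity of $\mathcal{E}_{\tau_k}$ together with the constancy of $\eta'$ on $[\tau_k,\tau_{k+1})$, should pin down $\eta'$ between $\tau_k$ and $\tau_{k+1}$. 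Making this propagation clean, and in particular handling the random location and number of the points of increase of $\eta'$, is the technical heart of the argument.

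For part (ii), I would first observe that the level-passage times $\underline{\tau}$ and $\bar{\tau}$ associated to $\eta$ coincide with the analogous ones for $L$ in Theorem \ref{srt1}, since $\eta_v\geq 0$ if and only if $L_w\geq 0$ for some $w\leq v$, and similarly for the strict inequality. Next, rewriting the hypothesis $Y_{\tau^*}=X_{\tau^*}$ via the representations for $Y$ and for $X$ yields
\begin{displaymath}
\mathcal{E}_{\tau^*}\Bigl[\sum_{u=\tau^*}^{N-1}\eta_u + X_N\Bigr] = \mathcal{E}_{\tau^*}\Bigl[\sum_{u=\tau^*}^{N-1}\max_{\tau^*\leq v\leq u}L_v + X_N\Bigr],
\end{displaymath}
where the left integrand dominates the right pointwise (for $u\geq\tau^*$, $\eta_u=\max_{0\leq v\leq u}L_v\geq \max_{\tau^*\leq v\leq u}L_v$). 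Strict monotonicity of the conditional $g$-expectation then forces $\eta_u = \max_{\tau^*\leq v\leq u}L_v$ a.s. for every $u\geq\tau^*$, and specializing to $u=\tau^*$ yields $L_{\tau^*}=\max_{0\leq v\leq \tau^*}L_v$. This is precisely the remaining hypothesis of the optimality criterion in Theorem \ref{srt1}, so part (ii) follows directly from the level-crossing principle established there.
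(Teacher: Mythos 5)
Your existence half of part (i) matches the paper's argument, but the uniqueness half is a genuine gap, and you flag it yourself. The forward induction over the points of increase $0=\tau_0<\tau_1<\cdots$ of the competitor $\eta'$ does not get off the ground as sketched: at a point of increase $\tau_k$ of $\eta'$ you know $Y'_{\tau_k}=X_{\tau_k}$, but for the candidate pair you only know $Y_{\tau_k}\geq X_{\tau_k}$ (equality for $Y$ is available at points of increase of $\eta$, and $\tau_k$ need not be one), so the two identities you want to compare are not both at hand. Moreover, even granting $Y_{\tau_k}=Y'_{\tau_k}$, a single scalar equality of non-linear conditional expectations of sums over the whole remaining horizon does not pin down $\eta'$ on $[\tau_k,\tau_{k+1})$: the processes $\eta$ and $\eta'$ need not be ordered there (they may cross), so neither monotone nor strict comparison applies termwise. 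The paper handles exactly this crossing issue by a different device: for a competitor pair $(Z,\zeta)$ and each $\varepsilon>0$ it introduces the stopping times $\sigma_\varepsilon=\min\{t\geq 0\,|\,\eta_t>\zeta_t+\varepsilon\}\wedge N$ and $\tau_\varepsilon=\inf\{t\geq\sigma_\varepsilon\,|\,\zeta_t\geq\eta_t\}\wedge N$, observes that $\sigma_\varepsilon$ is a point of increase of $\eta$ and $\tau_\varepsilon$ (when $\leq N-1$) of $\zeta$, and on $\{\sigma_\varepsilon\leq N-1\}$ combines the flat-off conditions at these two times with the strict comparison theorem on the interval $[\sigma_\varepsilon,\tau_\varepsilon)$, where $\eta>\zeta$ holds pointwise, to obtain the contradiction $X_{\sigma_\varepsilon}>Z_{\sigma_\varepsilon}\geq X_{\sigma_\varepsilon}$; hence $\eta\leq\zeta+\varepsilon$ for every $\varepsilon$, and by symmetry $\eta=\zeta$, whence $Y=Z$. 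Without an argument of this kind (or a completed version of your induction), part (i) is only half proved.

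Part (ii) of your proposal is correct and takes a slightly different route from the paper. The paper notes that, since $\eta$ is nondecreasing, $Y_t=\mathcal{E}_t[\sum_{u=t}^{N-1}\max_{t\leq v\leq u}\eta_v+X_N]$, applies Theorem \ref{srt1} to $Y$ with representing process $\eta$ (for which the second condition in \eqref{sr9} is automatic), and transfers optimality from $Y$ to $X$ by proving $\sup_{\tau}\mathcal{E}[X_\tau]=\sup_{\tau}\mathcal{E}[Y_\tau]$ through the point of increase $\bar{\tau}$ and the hypothesis $Y_{\tau^*}=X_{\tau^*}$. You instead extract from $Y_{\tau^*}=X_{\tau^*}$, via the termwise ordering of the integrands and strict comparison, the identity $L_{\tau^*}=\max_{0\leq v\leq\tau^*}L_v$ on $\{\tau^*\leq N-1\}$ (on $\{\tau^*=N\}$ the relevant sums are empty, so nothing is needed), check that the level-passage times of $\eta$ coincide with those of $L$, and apply Theorem \ref{srt1} directly to $X$. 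Both arguments are sound; yours is more direct, while the paper's sidesteps any manipulation of the hypothesis by working at the level of the dominating process $Y$.
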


\begin{proof}
	(i) We first show that the process $Y$ associated with the process $\eta$ defined by $L$ dominates $X$ and $Y_\tau=X_\tau$, $P$-a.s. for any point of increase $\tau$ of $\eta$ and $\tau=N$. It is easy to check that $Y_\tau\geq X_\tau$ and $Y_N=X_N$. Now if $\tau$ is a point of increase for $\eta$, we have $\eta_\tau>\eta_{\tau-1}$, which implies that $L_\tau>\max_{0\leq v\leq \tau-1}L_v$. Therefore, for any $u\geq \tau$, it follows that
	\begin{displaymath}
		\max_{0\leq v\leq u}L_v=\max_{\tau\leq v\leq u}L_v,
	\end{displaymath}
	which yields that $Y_\tau=X_\tau$.
	
	We are now in a position to show the uniqueness. Suppose that $\zeta=\{\zeta_t\}_{t=0,1,\cdots,N-1}$ is another adapted, square-integrable and nondecreasing process such that the corresponding adapted process
	\begin{displaymath}
		Z_\tau=\mathcal{E}_\tau[\sum_{u=\tau}^{N-1}\zeta_u+X_N]
	\end{displaymath}
	dominates $X$ with $X_\tau=Z_\tau$ for any point of increase $\tau$ for $\zeta$ and $\tau=N$. For any $\varepsilon>0$, define the following two stopping times
	\begin{displaymath}
		\sigma_\varepsilon=\min\{t\geq 0|\eta_t>\zeta_t+\varepsilon\}\wedge N,\  \ \tau_\varepsilon=\inf\{t\geq \sigma_\varepsilon|\zeta_t\geq \eta_t\}\wedge N.
	\end{displaymath}
	It is easy to check that on the set $\{\sigma_\varepsilon\leq N-1\}$, $\sigma_\varepsilon<\tau_\varepsilon$ and $\sigma_\varepsilon$ is a point of increase for $\eta$. Furthermore, on the set $\{\tau_\varepsilon\leq N-1\}$, $\tau_\varepsilon$ is a point of increase for $\zeta$. By simple calculation, on the set $\{\sigma_\varepsilon\leq N-1\}\cap \{\tau_\varepsilon\leq N-1\}$, we have 
	\begin{align*}
		X_{\sigma_\varepsilon}=Y_{\sigma_\varepsilon}&=\mathcal{E}_{\sigma_\varepsilon}[\sum_{u=\sigma_\varepsilon}^{\tau_\varepsilon-1}\eta_u+\sum_{u=\tau_\varepsilon}^{N-1}\eta_u+X_N]
		>\mathcal{E}_{\sigma_\varepsilon}[\sum_{u=\sigma_\varepsilon}^{\tau_\varepsilon-1}\zeta_u+\mathcal{E}_{\tau_\varepsilon}[\sum_{u=\tau_\varepsilon}^{N-1}\eta_u+X_N]]\\
		&=\mathcal{E}_{\sigma_\varepsilon}[\sum_{u=\sigma_\varepsilon}^{\tau_\varepsilon-1}\zeta_u+Y_{\tau_\varepsilon}]\geq\mathcal{E}_{\sigma_\varepsilon}[\sum_{u=\sigma_\varepsilon}^{\tau_\varepsilon-1}\zeta_u+X_{\tau_\varepsilon}]=\mathcal{E}_{\sigma_\varepsilon}[\sum_{u=\sigma_\varepsilon}^{\tau_\varepsilon-1}\zeta_u+Z_{\tau_\varepsilon}]\\
		&=\mathcal{E}_{\sigma_\varepsilon}[\sum_{u=\sigma_\varepsilon}^{\tau_\varepsilon-1}\zeta_u+\sum_{u=\tau_\varepsilon}^{N-1}\zeta_u+X_N]=Z_{\sigma_\varepsilon}\geq X_{\sigma_\varepsilon}.
	\end{align*}
	On the set $\{\sigma_\varepsilon\leq N-1\}\cap \{\tau_\varepsilon=N\}$, we obtain that
	\begin{displaymath}
			X_{\sigma_\varepsilon}=Y_{\sigma_\varepsilon}=\mathcal{E}_{\sigma_\varepsilon}[\sum_{u=\sigma_\varepsilon}^{N-1}\eta_u+X_N]>\mathcal{E}_{\sigma_\varepsilon}[\sum_{u=\sigma_\varepsilon}^{N-1}\zeta_u+X_N]=Z_{\sigma_\varepsilon}\geq X_{\sigma_\varepsilon}.
	\end{displaymath}
	The contradiction implies that $\sigma_\varepsilon=N$ almost surely, i.e. $\eta_t\leq \zeta_t+\varepsilon$ for any $t=0,1,\cdots,N-1$. Since $\varepsilon$ can be arbitrarily small, this implies that $\eta\leq \zeta$. Consequently, we have $\zeta\leq \eta$. Thus we get the uniqueness.
	
	(ii) 	Since $\eta=\{\eta_t\}=\{\max_{0\leq v\leq t }L_v\}$ is an increasing process, we derive that
	\begin{displaymath}
	Y_t=\mathcal{E}_t[\sum_{u=t}^{N-1}\eta_u+X_N]=\mathcal{E}_t[\sum_{u=t}^{N-1}\max_{t\leq v\leq u}\eta_v+X_N].
	\end{displaymath}
	By Theorem \ref{srt1}, $\bar{\tau}$ maximizes $\mathcal{E}[Y_\tau]$ over all $\tau\in\mathcal{T}_{0,N}$. Noting that on the set $\{\bar{\tau}\leq N-1\}$, $\bar{\tau}$ is a point of increase for $\eta$, we obtain that $X_{\bar{\tau}}=Y_{\bar{\tau}}$, which implies that
	\begin{displaymath}
		\sup_{\tau\in\mathcal{T}_{0,N}} \mathcal{E}[X_\tau]\geq \mathcal{E}[X_{\bar{\tau}}]=\mathcal{E}[Y_{\bar{\tau}}]=\sup_{\tau\in\mathcal{T}_{0,N}}\mathcal{E}[Y_\tau].
	\end{displaymath}
    Since $Y$ dominates $X$, it is obvious that $\sup_{\tau\in\mathcal{T}_{0,N}} \mathcal{E}[X_\tau]\leq \sup_{\tau\in\mathcal{T}_{0,N}} \mathcal{E}[Y_\tau]$. Therefore, the value of the optimal stopping for $X$ equals to the one for $Y$. It is easy to check that $\max_{0\leq v\leq \tau^*}\eta_v=\eta_{\tau^*}$. Theorem \ref{srt1} shows that
    $\mathcal{E}[Y_{\tau^*}]=\sup_{\tau\in\mathcal{T}_{0,N}}\mathcal{E}[Y_\tau]$. We finally get that
    \begin{displaymath}
    	\mathcal{E}[X_{\tau^*}]=\mathcal{E}[Y_{\tau^*}]=\sup_{\tau\in\mathcal{T}_{0,N}}\mathcal{E}[Y_\tau]
    =\sup_{\tau\in\mathcal{T}_{0,N}} \mathcal{E}[X_\tau].
    \end{displaymath}
    The proof is complete.
\end{proof}

We state the result for $f$ satsifying conditions (1) and (2).
\begin{corollary}\label{srt6}
	Assume that the function $f$ satisfies conditions (1) and (2). Let $X=\{X_t\}_{t=0,1,\cdots,N}$ be an adapted and square-integrable sequence and $L=\{L_t\}_{t=0,1,\cdots,N-1}$ be the solution of the following backward equation
	\begin{displaymath}
		X_t=\mathcal{E}_t[\sum_{u=t}^{N-1}f(u,\max_{t\leq v\leq u}L_v)+X_N].
	\end{displaymath}
	Then, there exists a unique adapted square-integrable  process
	 $Y=\{Y_n\}_{n=0}^N$ and  a unique adapted, square-integrable and
	nondecreasing process $\eta=\{\eta_n\}_{n=0}^{N-1}$ satisfying
	\begin{displaymath}
		Y_\tau=\mathcal{E}_\tau[\sum_{u=\tau}^{N-1} f(u,\eta_u)+X_N],\ \ \tau\in\mathcal{T}_{0,N},
	\end{displaymath}
	 such that $Y$ is dominated by $X$ and $Y_\tau=X_\tau$, $P$-a.s. for any point of increase $\tau$ for $\eta$ and $\tau=N$. In fact, $\eta$ has the following representation
		\begin{displaymath}
	\eta_t=\max_{0\leq v\leq t}L_v, \textrm{ for any } t=0,1,\cdots,N-1.
	\end{displaymath}
\end{corollary}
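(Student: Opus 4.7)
The plan is to follow the proof of Theorem~\ref{srt2} line by line, modifying only what the strict decrease of $f$ forces us to change: the direction of dominance ($Y$ now lies below $X$), the direction of the strict inequality produced by applying $f$ to unequal arguments, and the correspondingly swapped roles of the two candidate processes in the uniqueness step. For existence, I would set $\eta_t := \max_{0\leq v\leq t} L_v$ for $t=0,1,\ldots,N-1$ and define $Y$ by the displayed formula. By construction, $\eta$ is adapted, nondecreasing and square-integrable (inherited from the assumption on $L$), and $Y$ is well-defined and square-integrable. The domination $Y \leq X$ follows from $\eta_u \geq \max_{\tau\leq v\leq u} L_v$ for all $u\geq \tau$, combined with the strict decrease of $f$ and the monotonicity of $\mathcal{E}_\tau$. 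The equality $Y_\tau = X_\tau$ at any point of increase $\tau$ of $\eta$ comes from $\eta_\tau > \eta_{\tau-1}$ forcing $L_\tau > \max_{0\leq v\leq \tau-1} L_v$, whence $\max_{0\leq v\leq u} L_v = \max_{\tau\leq v\leq u} L_v$ for all $u\geq \tau$; the equality at $\tau=N$ is immediate.

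For uniqueness, let $(\widetilde{Y}, \zeta)$ be any other admissible pair. I would define, for $\varepsilon>0$,
\[
\sigma_\varepsilon := \min\{t \geq 0 : \eta_t > \zeta_t + \varepsilon\} \wedge N, \qquad \tau_\varepsilon := \inf\{t \geq \sigma_\varepsilon : \zeta_t \geq \eta_t\} \wedge N,
\]
exactly as in the proof of Theorem~\ref{srt2}. Monotonicity of $\zeta$ then ensures that, on $\{\sigma_\varepsilon \leq N-1\}$, $\sigma_\varepsilon$ is a point of increase for $\eta$ (so $Y_{\sigma_\varepsilon} = X_{\sigma_\varepsilon}$), while on $\{\tau_\varepsilon \leq N-1\}$, $\tau_\varepsilon$ is a point of increase for $\zeta$ (so $\widetilde{Y}_{\tau_\varepsilon} = X_{\tau_\varepsilon}$). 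On the event $\{\sigma_\varepsilon \leq N-1\} \cap \{\tau_\varepsilon \leq N-1\}$, splitting the sum at $\tau_\varepsilon$ and using the tower property, the strict comparison theorem for $\mathcal{E}$ (applicable because $\eta > \zeta$ on $[\sigma_\varepsilon, \tau_\varepsilon)$ and $f(u,\cdot)$ is strictly decreasing), the domination $\widetilde{Y} \leq X$, and the identity $\widetilde{Y}_{\tau_\varepsilon} = X_{\tau_\varepsilon}$, one arrives at
\[
X_{\sigma_\varepsilon} \, = \, Y_{\sigma_\varepsilon} \, < \, \widetilde{Y}_{\sigma_\varepsilon} \, \leq \, X_{\sigma_\varepsilon},
\]
a contradiction. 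The event $\{\sigma_\varepsilon \leq N-1\} \cap \{\tau_\varepsilon = N\}$ is dispatched by an analogous but shorter computation. Hence $\sigma_\varepsilon = N$ almost surely, i.e.\ $\eta \leq \zeta + \varepsilon$; letting $\varepsilon \downarrow 0$ and swapping the roles of $\eta$ and $\zeta$ yields $\eta = \zeta$, which in turn forces $Y = \widetilde{Y}$ via the defining formula.

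The main obstacle will be bookkeeping with signs: the hypothesis ``$Y$ dominates $X$'' from Theorem~\ref{srt2} becomes ``$Y$ is dominated by $X$'' here, and the strict decrease of $f$ reverses the strict inequality drawn from $\eta > \zeta$. These two reversals happen to cancel out, which is why the stopping times $\sigma_\varepsilon$, $\tau_\varepsilon$ can be taken with exactly the same defining inequalities as in the original argument; the only substantive novelty is that one applies the strict comparison theorem for $\mathcal{E}_\sigma$ to increments whose ``integrand'' is the nonlinear quantity $f(u,\cdot)$ rather than the identity, which is precisely the role played by condition~(1) on $f$.
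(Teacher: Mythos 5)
Your proposal is correct and follows essentially the same route as the paper, which proves the corollary exactly by repeating the argument of Theorem \ref{srt2} (existence via $\eta_t=\max_{0\leq v\leq t}L_v$ and the point-of-increase identity, uniqueness via the stopping times $\sigma_\varepsilon,\tau_\varepsilon$ and strict comparison), with the only change being the sign reversals you identify: $f$ strictly decreasing makes $Y$ dominated by $X$ and flips the strict inequality, and these reversals cancel so the same $\sigma_\varepsilon,\tau_\varepsilon$ yield the contradiction $X_{\sigma_\varepsilon}<X_{\sigma_\varepsilon}$.
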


\subsection{Exercising optimally  American puts under Knightian uncertainty}
It is well known that (superhedging) pricing of American options is closely related to  optimal stopping. More precisely, the superhedging price of the American option  corresponds (up to discounting) to the value of an optimal stopping problem and the first time  the discounted Snell envelope hits the discounted payoff process is an  optimal exercise time.  The shortcoming of this approach, when applied to American put options,  is that, in order to derive  optimal exercise times for different strike prices, we need to calculate the associated Snell envelopes first. This would turn into a tedious task as the strike prices may take values in a wide range. One may wonder whether there is a universal process to determine the optimal exercise times simultaneously for different strike prices. With the help of the stochastic representation problem, the answer is affirmative.

In this sub-section, we focus on  American put options with different strike prices $k$, where $k>0$. We place ourselves in an arbitrage-free  market model in discrete time  with two primary assets: a risky asset with price process denoted by $(P_t)_{t=0,1,\cdots,N}$ and a risk-free asset with price process modeled by $((1+r)^{-t})_{t=0,1,\cdots,N},$  where $r$ is a given positive constant, modeling the risk-free interest rate. We consider an agent whose preferences are numerically represented by a utility of the form  of a non-linear expectation $\mathcal{E}$. If an American put option  with strike price $k>0$ on the risky asset is exercised at time $\tau$, then  the pay-off is  $(k-P_\tau)^+$. We consider an  agent who  aims at maximizing the utility of the (discounted) terminal pay-off of the put option over all possible exercise times $\tau$. Thus, the agent aims at solving the following non-linear optimal stopping problem:
$$v= \sup_{\tau\in \mathcal{T}_{0,N}} \mathcal{E}[(1+r)^{-\tau}(k-P_\tau)^+].$$

The following two theorems provide an optimality criterion for  constructing optimal exercise  times for the non-linear optimal stopping  problem  in terms of a universal process $(K_t)$, which is "independent"   of the strike price $k$ of the put option. 
The first theorem gives the existence of the universal process $(K_t)$ via the  non-linear  stochastic representation.  
The universal process $(K_t)$ depends on the discounted price process of the underlying risky asset 
(and hence on the primary assets in the market model)  and on the agent's preferences via  $\mathcal{E}$, but is independent of the strike of the  American put.  

\begin{theorem}\label{srt4}
	Assume that the discounted price process $\{(1+r)^{-t}P_t\}_{t=0,1,\cdots,N}$ is adapted and square-integrable. Then, for any $\tau\in\mathcal{T}_{0,N}$, the discounted price process  admits a unique representation
	\begin{equation}\label{sr10}
		-(1+r)^{-\tau}P_\tau=\mathcal{E}_\tau[\sum_{u=\tau}^{N-1}\frac{r}{1+r}(1+r)^{-u}\max_{\tau\leq v\leq u}(-K_v)+(1+r)^{-N}\max_{\tau\leq v\leq N}(-K_v)]
	\end{equation}
	for some adapted and square-integrable process $K=\{K_t\}_{t=0,1,\cdots,N}$.
	
	For any $k\geq 0$, consider the following two stopping times
	\begin{displaymath}
		\underline{\tau}^k=\min\{0\leq t\leq N|K_t\leq k\}, \ \ \bar{\tau}^k=\min\{0\leq t\leq N|K_t<k\}
	\end{displaymath}
	and the optimal stopping problem
	\begin{equation}\label{sr12}
		V=\sup_{\tau\in \mathcal{T}_{N\cup {+\infty}}}\mathcal{E}[(1+r)^{-\tau}(k-P_\tau)I_{\{\tau\leq N\}}],
	\end{equation}
	where $\mathcal{T}_{N\cup \{+\infty\}}$ is the set of all stopping times taking values in $\{0,1,\cdots,N,+\infty\}$. If a stopping time $\tau^k$ satisfies the following
	\begin{equation}\label{sr11}
		\underline{\tau}^k\leq \tau^k\leq \bar{\tau}^k \textrm{ and } \min_{0\leq v\leq \tau^k}K_v=K_{\tau^k} \textrm{ on } \{\tau^k\leq N\},
	\end{equation}
	then $\tau^k$ is optimal for the problem \eqref{sr12}.
\end{theorem}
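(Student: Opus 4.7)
\textbf{Part 1 (existence/uniqueness of the representation \eqref{sr10}).} I invoke Theorem \ref{srt5} in its strictly-increasing-$f$ version (cf.\ the remark following Proposition \ref{srp2}), applied to the adapted, square-integrable process $X_t := -(1+r)^{-t}P_t$ with $f(u,l) := c_u l$, where $c_u := \frac{r}{1+r}(1+r)^{-u}$ for $0\le u\le N-1$ and $c_N := (1+r)^{-N}$. Each $c_u$ is strictly positive, so $f(u,\cdot)$ is continuous and strictly increasing from $-\infty$ to $+\infty$, and the adaptedness and integrability conditions on $f(\cdot,l)$ are trivially satisfied. The theorem produces a unique adapted $L$ with $X_t = \mathcal{E}_t[\sum_{u=t}^{N}f(u,\max_{t\le v\le u}L_v)]$ at each deterministic $t$; setting $K_t := -L_t$ gives \eqref{sr10} at deterministic times. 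The extension to arbitrary $\tau \in \mathcal{T}_{0,N}$ is standard: on each $\{\tau=i\} \in \mathcal{F}_i$, multiply the representation at $t=i$ by $I_{\{\tau=i\}}$ (using the zero-one law of property (4)) and sum over $i = 0,\ldots,N$.

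\textbf{Part 2 (representation of the put payoff and upper bound for $J(\tau)$).} The telescoping identity $(1+r)^{-\tau} = \sum_{u=\tau}^{N-1}c_u + c_N$, combined with the translation invariance of $\mathcal{E}_\tau$ and the representation of $-(1+r)^{-\tau}P_\tau$ from Part 1, yields, for every $\tau\in\mathcal{T}_{0,N}$,
\[
(1+r)^{-\tau}(k-P_\tau) = \mathcal{E}_\tau\!\left[\sum_{u=\tau}^{N-1}c_u\max_{\tau\le v\le u}(k-K_v) + c_N\max_{\tau\le v\le N}(k-K_v)\right].
\]
Setting $M_u^+ := (\max_{0\le v\le u}(k-K_v))\vee 0$, monotonicity and the tower property of $\mathcal{E}$ yield
\[
J(\tau) := \mathcal{E}[(1+r)^{-\tau}(k-P_\tau)I_{\{\tau\le N\}}] \le \mathcal{E}\!\left[\sum_{u=\tau}^{N-1}c_u M_u^+ + c_N M_N^+\right].
\]
Since $K_v\ge k$ for $v<\bar{\tau}^k$ makes $M_u^+=0$ for $u<\bar{\tau}^k$, while $M_u^+\ge 0$ everywhere, the lower limit of the sum may be shifted to $\bar{\tau}^k$ without decreasing it; moreover, for $u\ge\bar{\tau}^k$ one has $M_u^+ = \max_{\bar{\tau}^k\le v\le u}(k-K_v)$. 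Denoting the resulting bound by $V^\star$, I obtain $J(\tau)\le V^\star$ for every $\tau\in\mathcal{T}_{N\cup\{+\infty\}}$ (the case $\tau=+\infty$ gives $J(\tau)=0\le V^\star$, as $V^\star\ge 0$).

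\textbf{Part 3 (attainment by $\tau^k$) and main obstacle.} To show $J(\tau^k)=V^\star$: condition \eqref{sr11} combined with $K_{\underline{\tau}^k}\le k$ and $\underline{\tau}^k\le\tau^k$ forces $K_{\tau^k}\le K_{\underline{\tau}^k}\le k$ on $\{\tau^k\le N\}$. Hence for $u\ge\tau^k$, $\max_{\tau^k\le v\le u}(k-K_v) \ge k-K_{\tau^k} \ge 0$ and, since the minimum of $K$ over $[0,\tau^k]$ is $K_{\tau^k}$, this max equals $\max_{0\le v\le u}(k-K_v)$, so $M_u^+ = \max_{\tau^k\le v\le u}(k-K_v)$. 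Plugging this into the representation of $(1+r)^{-\tau^k}(k-P_{\tau^k})$ derived in Part 2 and using $\tau^k\le\bar{\tau}^k$ to shift the lower index of the sum to $\bar{\tau}^k$ (the intermediate terms being zero), I conclude $J(\tau^k)=V^\star$. The degenerate case $\bar{\tau}^k=+\infty$ (forcing $K_v\ge k$ for all $v\le N$) gives $V^\star = 0 = J(\tau^k)$ trivially. The main obstacle is careful bookkeeping: rewriting the running maxima over $[0,u]$ as running maxima over $[\bar{\tau}^k,u]$ or $[\tau^k,u]$, justifying the shifts of summation indices, and handling the edge cases where $\bar{\tau}^k$, $\underline{\tau}^k$, or $\tau$ equal $+\infty$.
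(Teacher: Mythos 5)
Your proposal is correct in substance, but it takes a genuinely different route from the paper's. The paper never argues with the finite-horizon representation directly: it embeds the problem into an infinite-horizon one by setting $X^k_t=(1+r)^{-t}(k-P_{t\wedge N})$ and $L^k_t=k-K_{t\wedge N}$, introduces the auxiliary driver $\hat{g}(t,z)=g(t,z)I_{\{t\le N\}}+e^{-t}zI_{\{t>N\}}$ with its infinite-horizon expectation $\hat{\mathcal{E}}$, shows that problem \eqref{sr12} has the same value and the same maximizers as $\sup_{\tau\in\mathcal{T}_\infty}\hat{\mathcal{E}}[X^k_\tau]$ (using $r>0$ to rule out stopping in $(N,\infty)$), verifies the representation $X^k_\tau=\hat{\mathcal{E}}_\tau[\sum_{u=\tau}^{\infty}\frac{r}{1+r}(1+r)^{-u}\sup_{\tau\le v\le u}L^k_v]$, and then simply cites the infinite-horizon level-crossing principle, Proposition \ref{srt3}. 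You instead stay entirely within the finite-horizon $g$-expectation: you obtain $K$ from Theorem \ref{srt5} (increasing-$f$ version) exactly as the paper implicitly does, absorb the strike through the telescoping identity $(1+r)^{-\tau}=\sum_{u=\tau}^{N-1}c_u+c_N$ and translation invariance, and then redo the level-crossing argument of Theorem \ref{srt1} by hand for a representation carrying the extra terminal term $c_N\max_{\tau\le v\le N}(k-K_v)$ and the indicator $I_{\{\tau\le N\}}$. This buys a more elementary, self-contained proof: no infinite-horizon BSDE theory, no value-equivalence step, and $\tau=+\infty$ is harmless because the payoff then vanishes and the reward is $\mathcal{F}_N$-measurable; the cost is that you cannot simply quote Theorem \ref{srt1} or Proposition \ref{srt3} (neither covers the terminal running-maximum term), so the level-crossing computation must indeed be carried out, which your Parts 2--3 do correctly in outline. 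Two items you label as bookkeeping should be written out: the passage from deterministic times to $\tau\in\mathcal{T}_{0,N}$ in \eqref{sr10}, and the case $0<P(\tau=+\infty)<1$, both of which need the zero-one law at $\mathcal{F}_{\tau\wedge N}$ (in the form $\mathcal{E}_{\tau\wedge N}[\,\cdot\,]I_{\{\tau\le N\}}=\mathcal{E}_{\tau\wedge N}[\,\cdot\,I_{\{\tau\le N\}}]$) together with tower and monotonicity at stopping times --- routine, and used in the same unproved form by the paper itself; also, like the paper, you establish existence and adaptedness of $K$ but do not verify its square-integrability.
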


\begin{proof}
	The proof will be divided into the following three parts.
	
	Step 1. For any $k\geq 0$, we define the following process $X^k=\{X^k_t\}_{t\in\mathbb{N}\cup\{+\infty\}}$, where
	\begin{displaymath}
		X_t^k=(1+r)^{-t}(k-P_{t\wedge N}).
	\end{displaymath}
	Consider the following optimal stopping problem
	\begin{equation}\label{sr13}
		V'=\sup_{\tau\in\mathcal{T}_\infty}\hat{\mathcal{E}}[X_\tau^k],
	\end{equation}
where $\hat{\mathcal{E}}[\cdot]$ is the $\hat{g}$-expectation for the infinite time case with
$$\hat{g}(t,z)=g(t,z)I_{\{t\leq N\}}+e^{-t}z I_{\{t>N\}}.$$
Clearly, for any $\mathcal{F}_N$-measurable and square-integrable random variable $\xi$, we have $\hat{\mathcal{E}}[\xi]=\mathcal{E}[\xi]$.
	We claim that $V=V'$ and the optimal stopping times for \eqref{sr12} and \eqref{sr13} are the same. Since $r>0$, we derive that if $\tau^*$ is optimal for \eqref{sr13}, then $\tau^*$ takes values in $\{0,1,\cdots,N,+\infty\}$. Therefore, we have
	\begin{align*}
		\sup_{\tau\in \mathcal{T}_\infty}\hat{\mathcal{E}}[X_\tau^k]&=\hat{\mathcal{E}}[X^k_{\tau^*}]=\hat{\mathcal{E}}[(1+r)^{-\tau^*}(k-P_{\tau^*\wedge N})]\\
		&=\mathcal{E}[(1+r)^{-\tau^*}(k-P_{\tau^*})I_{\{\tau^*\leq N\}}]\\
		&\leq \sup_{\tau\in \mathcal{T}_{N\cup {+\infty}}}\mathcal{E}[(1+r)^{-\tau}(k-P_\tau)I_{\{\tau\leq N\}}].
	\end{align*}
	Besides, for any $\tau\in\mathcal{T}_{N\cup\{+\infty\}}$, it is easy to check that
	\begin{displaymath}
		\mathcal{E}[(1+r)^{-\tau}(k-P_\tau)I_{\{\tau\leq N\}}]=\mathcal{E}[(1+r)^{-\tau}(k-P_{\tau\wedge N})]=\hat{\mathcal{E}}[X_\tau^k].
	\end{displaymath}
	It follows that
	\begin{displaymath}
		\sup_{\tau\in \mathcal{T}_{N\cup {+\infty}}}\mathcal{E}[(1+r)^{-\tau}(k-P_\tau)I_{\{\tau\leq N\}}]\leq \sup_{\tau\in \mathcal{T}_{N\cup {+\infty}}}\hat{\mathcal{E}}[X_\tau^k]\leq \sup_{\tau\in \mathcal{T}_\infty}\hat{\mathcal{E}}[X_\tau^k].
	\end{displaymath}
	Consequently, we obtain that $V=V'$ and the optimal stopping problems \eqref{sr12} and \eqref{sr13} have the same set of maximizers.
	
	Step 2. For any $t\in \mathbb{N}$, set
	\begin{displaymath}
		L_t^k=k-K_{t\wedge N}.
	\end{displaymath}
	We claim that
	\begin{displaymath}
		X_\tau^k=\hat{\mathcal{E}}_\tau[\sum_{u=\tau}^\infty \frac{r}{1+r}(1+r)^{-u}\sup_{\tau\leq v\leq u}L_v^k].
	\end{displaymath}
	Indeed, by simple calculation, we obtain that
	\begin{align*}
		&\hat{\mathcal{E}}_\tau[\sum_{u=\tau}^\infty \frac{r}{1+r}(1+r)^{-u}\sup_{\tau\leq v\leq u}L_v^k]\\
		=&\hat{\mathcal{E}}_\tau[\sum_{u=\tau}^\infty \frac{r}{1+r}(1+r)^{-u}\sup_{\tau\leq v\leq u}(k-K_{v\wedge N})]\\
		=&k(1+r)^{-\tau}+\hat{\mathcal{E}}_\tau[\sum_{u=\tau}^\infty \frac{r}{1+r}(1+r)^{-u}\sup_{\tau\leq v\leq u}(-K_{v\wedge N})]\\
		=&k(1+r)^{-\tau} +\hat{\mathcal{E}}_\tau[\sum_{u=\tau\wedge N}^{N-1} \frac{r}{1+r}(1+r)^{-u}\sup_{\tau\wedge N \leq v\leq u}(-K_v)+\sum_{u=\tau\vee N}^\infty \frac{r}{1+r}(1+r)^{-u}\sup_{\tau\wedge N\leq v\leq N}(-K_v)]\\
		=&k(1+r)^{-\tau}+\hat{\mathcal{E}}_\tau[\sum_{u=\tau\wedge N}^{N-1} \frac{r}{1+r}(1+r)^{-u}\sup_{\tau\wedge N \leq v\leq u}(-K_v)+(1+r)^{-\tau \vee N}\sup_{\tau\wedge N\leq v\leq N}(-K_v)].
	\end{align*}
	Denote by the second term in the last equality by $I$. Then on the set $\{\tau\leq N-1\}$, by Equation \eqref{sr10}, we have $I=-(1+r)^{-\tau}P_\tau$. Besides, on the set $\{\tau\geq N\}$, again by \eqref{sr10}, we derive that $I=(1+r)^{-\tau}(-K_N)=-(1+r)^{-\tau}P_N$. The above analysis shows that
	\begin{displaymath}
		\hat{\mathcal{E}}_\tau[\sum_{u=\tau}^\infty \frac{r}{1+r}(1+r)^{-u}\sup_{\tau\leq v\leq u}L_v^k]=k(1+r)^{-\tau}+I=(1+r)^{-\tau}(k-P_{\tau\wedge N})=X_\tau^k.
	\end{displaymath}
	
	Step 3. By Proposition \ref{srt3}, if $\tau^k$ satisfies the following condition
	\begin{equation}\label{sr14}
		\underline{\sigma}^k\leq \tau^k\leq \bar{\sigma}^k\textrm{ and } \sup_{0\leq v\leq \tau^k}L_v^k=L^k_{\tau^k} \textrm{ on } \{\tau^k<+\infty\},
	\end{equation}
	where $\underline{\sigma}^k=\min\{t\geq 0|L_t^k\geq 0\}$ and $\bar{\sigma}^k=\min\{t\geq 0|L_t^k>0\}$, then $\tau^k$ is optimal for the problem \eqref{sr13}. By Step 1, we know that $\{\tau^k<\infty\}=\{\tau^k\leq N\}$. By the definition of $L^k$, it is easy to check that $\underline{\sigma}^k=\underline{\tau}^k$ and $\bar{\sigma}^k=\bar{\tau}^k$ and all these stopping times belong to $\mathcal{T}_{N\cup \{+\infty\}}$. It follows that condition \eqref{sr14} is equivalent to condition \eqref{sr11}. Finally, we conclude that for any stopping time $\tau^k$ satisfying condition \eqref{sr11}, $\tau^k$ is optimal for problem \eqref{sr13}, hence optimal for problem \eqref{sr12} by Step 1.
\end{proof}

\begin{theorem}
	For any $\tau\in\mathcal{T}_{0,N}$, the solution $K$ of Equation \eqref{sr10} satisfies $K_\tau\geq P_\tau$, a.s. Besides, the restriction $\tau^k\wedge N$ of any optimal stopping time $\tau^k$ defined by Theorem \ref{srt4} is also optimal for the following problem
	\begin{displaymath}
		v=\sup_{\tau\in \mathcal{T}_{0,N}} \mathcal{E}[(1+r)^{-\tau}(k-P_\tau)^+].
	\end{displaymath}
\end{theorem}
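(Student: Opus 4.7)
The plan has two parts. For the first assertion $K_\tau \ge P_\tau$, I would start from the representation \eqref{sr10} and exploit the telescoping identity $\frac{r}{1+r}(1+r)^{-u} = (1+r)^{-u} - (1+r)^{-u-1}$, which gives $\sum_{u=\tau}^{N-1}\frac{r}{1+r}(1+r)^{-u} + (1+r)^{-N} = (1+r)^{-\tau}$, valid even for random lower index $\tau$. Since each running maximum satisfies $\max_{\tau \le v \le u}(-K_v) \ge -K_\tau$ and every coefficient in \eqref{sr10} is strictly positive, the random variable inside $\mathcal{E}_\tau$ dominates $-(1+r)^{-\tau} K_\tau$ almost surely. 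Monotonicity of the conditional $g$-expectation together with translation invariance and the assumption $g(\cdot,0)=0$, which yields $\mathcal{E}_\tau[Z] = Z$ for any $\mathcal{F}_\tau$-measurable $Z$, then gives $-(1+r)^{-\tau}P_\tau \ge -(1+r)^{-\tau}K_\tau$, hence $K_\tau \ge P_\tau$ since $(1+r)^{-\tau}>0$.

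For the second assertion, I would combine the inequality just proved with the optimality criterion in Theorem \ref{srt4}. On $\{\tau^k \le N\}$, condition \eqref{sr11} gives $K_{\tau^k} = \min_{0 \le v \le \tau^k} K_v \le K_{\underline\tau^k} \le k$, so by the first part $P_{\tau^k} \le K_{\tau^k} \le k$ and consequently $(k-P_{\tau^k})^+ = k-P_{\tau^k}$. On $\{\tau^k = +\infty\}$ the indicator in \eqref{sr12} vanishes while the positive part remains nonnegative. Together these observations yield the pointwise bound
\[
(1+r)^{-(\tau^k \wedge N)}(k-P_{\tau^k \wedge N})^+ \ge (1+r)^{-\tau^k}(k-P_{\tau^k})I_{\{\tau^k \le N\}}.
\]
Next, for an arbitrary $\tau \in \mathcal{T}_{0,N}$, I would introduce the stopping time $\tilde\tau := \tau\, I_{\{k - P_\tau \ge 0\}} + \infty\cdot I_{\{k - P_\tau < 0\}} \in \mathcal{T}_{N \cup \{+\infty\}}$, which is a stopping time because $\{P_\tau \le k\} \in \mathcal{F}_\tau$. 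A short case analysis shows $(1+r)^{-\tau}(k-P_\tau)^+ = (1+r)^{-\tilde\tau}(k - P_{\tilde\tau})I_{\{\tilde\tau \le N\}}$. Taking $\mathcal{E}$, using the optimality of $\tau^k$ for problem \eqref{sr12}, and then applying monotonicity of $\mathcal{E}$ to the displayed inequality yields $\mathcal{E}[(1+r)^{-\tau}(k-P_\tau)^+] \le \mathcal{E}[(1+r)^{-(\tau^k \wedge N)}(k-P_{\tau^k \wedge N})^+]$. Since $\tau^k \wedge N \in \mathcal{T}_{0,N}$, the reverse inequality $\mathcal{E}[(1+r)^{-(\tau^k \wedge N)}(k-P_{\tau^k \wedge N})^+] \le v$ is automatic, and equality gives optimality.

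The main care needed is in the bookkeeping on the event $\{\tau^k = +\infty\}$: one must check that passing from $\tau^k$ to $\tau^k \wedge N$ and from $\tau$ to $\tilde\tau$ preserves the relevant (in)equalities, and that the associated $\tilde\tau$ is genuinely a stopping time. Outside of this, the ingredients are all standard: the telescoping identity, monotonicity, and translation invariance of $\mathcal{E}_\tau$. In particular, no strict monotonicity or finer Lipschitz bound on $g$ is invoked, so the conclusion holds under the same hypotheses as Theorem \ref{srt4}.
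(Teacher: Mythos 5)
Your proposal is correct and follows essentially the same route as the paper: the telescoping identity $\sum_{u=\tau}^{N-1}\frac{r}{1+r}(1+r)^{-u}+(1+r)^{-N}=(1+r)^{-\tau}$ together with monotonicity of $\mathcal{E}_\tau$ gives $K_\tau\geq P_\tau$, and the bound $K_{\tau^k}\leq k$ on $\{\tau^k\leq N\}$ (from $\underline{\tau}^k\leq\tau^k$ and condition \eqref{sr11}) lets one transfer the optimality of $\tau^k$ for problem \eqref{sr12} to the positive-part payoff. The only differences are cosmetic: you spell out, via the auxiliary stopping time $\tilde\tau$, the comparison $v\leq V$ that the paper leaves implicit, and on $\{\tau^k=+\infty\}$ you content yourself with an inequality where the paper's asserted equality also holds (since \eqref{sr10} at $\tau=N$ forces $K_N=P_N$, so $(k-P_N)^+=0$ there).
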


\begin{proof}
	For any $\tau\in\mathcal{T}_{0,N}$, it is easy to check that
	\begin{align*}
		-(1+r)^{-\tau}P_\tau=&\mathcal{E}_\tau[\sum_{u=\tau}^{N-1}\frac{r}{1+r}(1+r)^{-u}\max_{\tau\leq v\leq u}(-K_v)+(1+r)^{-N}\max_{\tau\leq v\leq N}(-K_v)]\\
		\geq &\mathcal{E}_\tau[\sum_{u=\tau}^{N-1}\frac{r}{1+r}(1+r)^{-u}(-K_\tau)+(1+r)^{-N}(-K_\tau)]\\
		=&-(1+r)^{-\tau}K_\tau,
	\end{align*}
	which implies that $P_\tau\leq K_\tau$. We claim that on the set $\{\tau^k\leq N\}$, $K_{\tau^k}\leq k$. Otherwise, $P(\{\tau^k\leq N\}\cap \{K_{\tau^k}>k\})>0$. Since $\underline{\tau}^k\leq \tau^k\leq N$, we have $K_{\underline{\tau}^k}\leq k$. Therefore, on the set $\{\tau^k\leq N\}\cap \{K_{\tau^k}>k\}$, we obtain that
	\begin{displaymath}
		\min_{0\leq v\leq \tau^k}K_v\leq k\neq K_{\tau^k},
	\end{displaymath}
	which leads to a contradiction. It follows that $P_{\tau^k}\leq K_{\tau^k}\leq k$ on the set $\{\tau^k\leq N\}$. Thus,
	\begin{displaymath}
		\mathcal{E}[(1+r)^{-\tau^k}(k-P_{\tau^k})I_{\{\tau^k\leq N\}}]=\mathcal{E}[(1+r)^{-\tau^k\wedge N}(k-P_{\tau^k\wedge  N})^+]
	\end{displaymath}
	and then $\tau^k\wedge N$ maximizes $\mathcal{E}[(1+r)^{-\tau}(k-P_\tau)^+]$ over all $\tau\in\mathcal{T}_{0,N}$.
\end{proof}

\section*{Acknowledgments}
\noindent Financial support by the German Research Foundation (DFG) through the Collaborative Research Centre 1283 ``Taming uncertainty and profiting from randomness and low regularity in analysis, stochastics and their applications'' is gratefully acknowledged.

\end{document}